\theoremstyle{plain}
\newtheorem{thm}{Theorem}[section]
\newtheorem{prop}[thm]{Proposition}
\newtheorem{lem}[thm]{Lemma}
\newtheorem{cor}[thm]{Corollary}
\newtheorem{ques}[thm]{Question}
\theoremstyle{definition}
\newtheorem{dfn}[thm]{Definition}
\newtheorem{dfns-rems}[thm]{Definitions and Remarks}
\newtheorem{notas-rems}[thm]{Notations and Remarks}
\newtheorem{exmps-rems}[thm]{Examples and Remarks}
\DeclareMathOperator{\ind-match}{ind-match}
\begin{document}


\title[Regularity of squarefree powers of edge ideals]{On the Castelnuovo--Mumford regularity of squarefree powers of edge ideals}


\author[S. A. Seyed Fakhari]{S. A. Seyed Fakhari}

\address{S. A. Seyed Fakhari, School of Mathematics, Statistics and Computer Science,
College of Science, University of Tehran, Tehran, Iran.}

\email{aminfakhari@ut.ac.ir}


\begin{abstract}
Assume that $G$ is a graph with edge ideal $I(G)$ and matching number ${\rm match}(G)$. For every integer $s\geq 1$, we denote the $s$-th squarefree power of $I(G)$ by $I(G)^{[s]}$. It is shown that for every positive integer $s\leq {\rm match}(G)$, the inequality ${\rm reg}(I(G)^{[s]})\leq {\rm match}(G)+s$ holds  provided that $G$ belongs to either of the following classes: (i) very well-covered graphs, (ii) semi-Hamiltonian graphs, or (iii) sequentially Cohen-Macaulay graphs. Moreover, we prove that for every Cameron-Walker graph $G$ and for every positive integer $s\leq {\rm match}(G)$, we have ${\rm reg}(I(G)^{[s]})={\rm match}(G)+s$
\end{abstract}


\subjclass[2020]{Primary: 13D02, 05E40, 05C70}


\keywords{Castelnuovo--Mumford regularity, Edge ideal, Matching number, Squarefree power}


\maketitle


\section{Introduction} \label{sec1}

Let $\mathbb{K}$ be a field and $S = \mathbb{K}[x_1,\ldots,x_n]$  be the
polynomial ring in $n$ variables over $\mathbb{K}$. Suppose that $M$ is a graded $S$-module with minimal free resolution
$$0  \longrightarrow \cdots \longrightarrow  \bigoplus_{j}S(-j)^{\beta_{1,j}(M)} \longrightarrow \bigoplus_{j}S(-j)^{\beta_{0,j}(M)}   \longrightarrow  M \longrightarrow 0.$$
The integer $\beta_{i,j}(M)$ is called the $(i,j)$th graded Betti number of $M$. The Castelnuovo--Mumford regularity (or simply, regularity) of $M$,
denoted by ${\rm reg}(M)$, is defined as
$${\rm reg}(M)=\max\{j-i|\ \beta_{i,j}(M)\neq0\},$$
and it is an important invariant in commutative algebra and algebraic geometry.

There is a natural correspondence between quadratic squarefree monomial ideals of $S$ and finite simple graphs with $n$ vertices. To every simple graph $G$ with vertex set $V(G)=\big\{x_1, \ldots, x_n\big\}$ and edge set $E(G)$, we associate its {\it edge ideal} $I=I(G)$ defined by
$$I(G)=\big(x_ix_j: x_ix_j\in E(G)\big)\subseteq S.$$Computing and finding bounds for the regularity of edge ideals and their powers have been studied by a number of researchers (see for example \cite{ab}, \cite{b}, \cite{bbh}, \cite{bht}, \cite{dhs}, \cite{e2}, \cite{ha}, \cite{hh1}, \cite{js}, \cite{jns}, \cite{k}, \cite{msy}, \cite{sy}, \cite{sy2} and \cite{wo}).

In \cite{ehhs}, Erey, Herzog, Hibi and Saeedi Madani studied the {\it squarefree powers} of edge ideals. Recall that for a squarefree monomial ideal $I$, the $s$-th squarefree power of $I$, denoted by $I^{[s]}$ is the ideal generated by squarefree monomials belonging to $I^s$. Clearly, for an edge ideal $I(G)$, we have $I(G)^{[s]}=0$, for $s\geq {\rm match}(G)+1$, where ${\rm math}(G)$ denotes the matching number of $G$ which is the size of the largest matching in $G$. It is known by \cite[Theorem 6.7]{hv} that$${\rm reg}(I(G))\leq {\rm match}(G)+1.$$In \cite[Theorem 2.1]{ehhs}, it is proven that$${\rm reg}(I(G)^{[2]})\leq {\rm match}(G)+2.$$As a generalization of the above inequalities, Erey et al. \cite{ehhs} asked the following question.

\begin{ques} [\cite{ehhs}, Question 2.3] \label{quest}
Let $G$ be a graph. Is it true that for every positive integer $s\leq {\rm match}(G)$, the inequality
\[
\begin{array}{rl}
{\rm reg}(I(G)^{[s]})\leq {\rm match}(G)+s
\end{array} \tag{$\dagger$} \label{dag}
\]
holds?
\end{ques}
In \cite{bhz}, Bigdeli et al. proved that for any graph $G$, the ideal $I(G)^{[{\rm match(G)}]}$ has a linear resolution. In particular, inequality \ref{dag} is true for $s={\rm match}(G)$. When $G$ is a forest, Erey and Hibi \cite{eh} provided a sharp upper bound for ${\rm reg}(I(G)^{[s]})$ in terms
of the so-called $s$-admissable matching number of $G$. It follows from their result that inequality \ref{dag} is true for any forest.

The goal of this paper is to prove inequality \ref{dag} for several classes of graphs. More precisely, it is shown in Theorem \ref{n2} that for every graph $G$ and for each positive integer $s\leq {\rm match}(G)$,
\[
\begin{array}{rl}
{\rm reg}(I(G)^{[s]})\leq s+\lfloor n/2\rfloor.
\end{array} \tag{$\ddagger$} \label{ddag}
\]
As a consequence, we will see in Corollaries \ref{vwc} and \ref{shamil} that inequality \ref{dag} is true if $G$ is either a very well-covered or a semi-Hamiltonian graph. Moreover, we will see in Corollary \ref{nver} that inequality \ref{dag} also holds for every graph $G$ with at most nine vertices.

When $G$ is a bipartite graph, we prove a strengthened version of inequality \ref{ddag}. Indeed, we show in Theorem \ref{bip} that for any bipartite graph $G$ with bipartition $V(G)=X\cup Y$ and for every positive integer $s\leq {\rm match}(G)$,$${\rm reg}(I(G)^{[s]})\leq \min\{|X|, |Y|\}+s.$$As a consequence, inequality \ref{dag} is true for any sequentially Cohen-Macaulay bipartite graph (see Corollary \ref{seq}).

In Section \ref{sec4}, we compute the regularity of squarefree powers of edge ideals of Cameron-Walker graphs (see Section \ref{sec2} for the definition of Cameron-Walker graphs). As the main result of that section, we prove in Theorem \ref{cw} that for any Cameron-Walker graph and for every positive integer $s$ with $s\leq {\rm match}(G)$, we have $${\rm reg}(I(G)^{[s]})={\rm match}(G)+s.$$


\section{Preliminaries} \label{sec2}

In this section, we provide the definitions and basic facts which will be used in the next sections.

All graphs in this paper are simple, i.e., have no loops and no multiple edges. Let $G$ be a graph with vertex set $V(G)=\big\{x_1, \ldots,
x_n\big\}$ and edge set $E(G)$. We identify the vertices (resp. edges) of $G$ with variables (resp. corresponding quadratic monomials) of $S$. For a vertex $x_i$, the {\it neighbor set} of $x_i$ is $N_G(x_i)=\{x_j\mid x_ix_j\in E(G)\}$. We set $N_G[x_i]=N_G(x_i)\cup \{x_i\}$. The {\it degree} of $x_i$, denoted by ${\rm deg}_G(x_i)$ is the cardinality of $N_G(x_i)$. A vertex of degree one is called a {\it leaf}. An edge $e\in E(G)$ is a {\it pendant edge}, if it is incident to a leaf. A {\it pendant triangle} of $G$ is a triangle $T$ of $G$, with the property that exactly two vertices of $T$ have degree two in $G$. A {\it star triangle} is the graph consisting of finitely many triangles sharing exactly one vertex. For every subset $U\subset V(G)$, the graph $G\setminus U$ has vertex set $V(G\setminus U)=V(G)\setminus U$ and edge set $E(G\setminus U)=\{e\in E(G)\mid e\cap U=\emptyset\}$. A subgraph $H$ of $G$ is called {\it induced} provided that two vertices of $H$ are adjacent if and only if they are adjacent in $G$. A subset $C$ of $V(G)$ is called a {\it vertex cover} of $G$ if every edge of $G$ is incident to at least one vertex of $C$. A vertex cover $C$ is called a {\it minimal vertex cover} of $G$ if no proper subset of $C$ is a vertex cover of $G$. A graph $G$ without isolated vertices is said to be {\it very well-covered} if $|V(G)|$ is an even integer and every minimal vertex cover of $G$ has cardinality $|V(G)|/2$. A {\it Hamiltonian cycle} (resp. a {\it Hamiltonian path}) of $G$ is a cycle (resp. a path) which visits every vertex of $G$. The graph $G$ is a {\it Hamiltonian graph} if it has a Hamiltonian cycle. If $G$ has a Hamiltonian path, then we say that $G$ is a {\it semi-Hamiltonian graph}. In particular, every Hamiltonian graph is semi-Hamiltonian.

Let $G$ be a graph. A subset $M\subseteq E(G)$ is a {\it matching} if $e\cap e'=\emptyset$, for every pair of edges $e, e'\in M$. The cardinality of the largest matching of $G$ is called the {\it matching number} of $G$ and is denoted by ${\rm match}(G)$. If every vertex of $G$ is incident to an edge in $M$, then $M$ is a {\it perfect matching} of $G$. A matching $M$ of $G$ is an {\it induced matching} of $G$ if for every pair of edges $e, e'\in M$, there is no edge $f\in E(G)\setminus M$ with $f\subset e\cup e'$. The cardinality of the largest induced matching of $G$ is the {\it induced  matching number} of $G$ and is denoted by $\ind-match(G)$. It is clear that for every positive integer $s$, the ideal $I(G)^{[s]}$ is generated by monomials of the form $e_1\ldots e_s$, where $\{e_1, \ldots, e_s\}$ is a matching of $G$.

A graph is said to be a Cameron-Walker graph if ${\rm match}(G)=\ind-match(G)$. It is clear that a graph is Cameron-Walker if and only if all its connected components are Cameron-Walker. By \cite[Theorem 1]{cw} (see also \cite[Remark 0.1]{hhko}), a connected graph $G$ is a Cameron-Walker graph if and only if

$\bullet$ it is a star graph, or

$\bullet$ it is a star triangle, or

$\bullet$ it consists of a connected bipartite graph $H$ by vertex partition $V(H)=X\cup Y$ with the property that  there is at least one pendant edge attached  to each  vertex of $X$ and there may be some pendant triangles attached to each vertex of $Y$.

\begin{dfn}
Let $G$ be a graph. Two vertices $u$ and $v$ ($u$ may be equal to $v$) are said to be even-connected with respect to an $s$-fold product $e_1 \ldots e_s$ of edges of $G$, if there is an integer $r\geq 1$ and a sequence $p_0, p_1, \ldots, p_{2r+1}$ of vertices of $G$ such that the following conditions hold.
\begin{itemize}
\item[(i)] $p_0=u$ and $p_{2r+1}=v$.

\item[(ii)] $p_0p_1, p_1p_2, \ldots, p_{2r}p_{2r+1}$ are edges of $G$.

\item[(iii)] For all $0\leq k\leq r-1, \{p_{2k+1},p_{2k+2}\}=e_i$ for some $i$.

\item[(iv)] For all $i$, $\mid\{k\mid \{p_{2k+1},p_{2k+2}\}=e_i\}\mid \leq\mid\{j\mid e_i=e_j\}\mid$.
\end{itemize}
If the above conditions are satisfied, then we say that $u$ and $v$ are even-connected with respect to $e_1 \ldots e_s$. Moreover, the sequence $p_0, p_1, \ldots, p_{2r+1}$ is called an even-connection between $u$ and $v$ with respect to $e_1 \ldots e_s$.
\end{dfn}

Let $G$ be a graph and suppose that $e_1 \ldots e_s$ is an $s$-fold product of edges of $G$. Banerjee \cite[Theorems 61 and 6.7]{b} proved that $(I(G)^{s+1}:e_1 \ldots e_s)$ is generated by quadratic monomials $uv$ (it is possible that $u=v$) such that either $uv\in E(G)$ or $u$ and $v$ are are even-connected with respect to $e_1 \ldots e_s$.

Let $M$ be a finitely generated graded $S$-module and let $\beta_{i,j}(M)$ denote the $(i,j)$th graded Betti number of $M$. Then $M$ is said to
have a {\it linear resolution}, if for some integer $d$, $\beta_{i,i+t}(M)=0$
for all $i$ and every integer $t\neq d$.

Let $u$ be a monomial in $S$. The {\it support} of $u$, denoted by ${\rm supp}(u)$ is the set of variables dividing $u$. For a pair of monomials $u$ and $v$, the greatest common divisor of $u$ and $v$ will be denoted by ${\rm gcd}(u,v)$. If $I$ is monomial ideal, then $G(I)$ is the set of minimal monomial generators of $I$.


\section{Upper bound for the regularity of squarefree powers} \label{sec3}

In this section, we prove that inequality \ref{dag} is true for several classes of graphs. To this end, we determine some upper bounds for the regularity of squarefree powers of edge ideals, Theorems \ref{n2} and \ref{bip}. In order to prove these theorems, we first provide a strategy, inspired by Banerjee's idea \cite{b}, to bound the regularity of squarefree powers of edge ideals, Theorem \ref{regcol}.

We first need to find a suitable ordering for the minimal monomial generators of squarefree powers of edge ideals.

\begin{prop} \label{rfirst}
Assume that $G$ is a graph and $s\leq {\rm math}(G)-1$ is a positive integer. Then the monomials in $G(I(G)^{[s]})$ can be labeled as $u_1, \ldots, u_m$ such that
for every pair of integers $1\leq j< i\leq m$, one of the following conditions holds.
\begin{itemize}
\item [(i)] $(u_j:u_i) \subseteq (I(G)^{[s+1]}:u_i)$; or
\item [(ii)] there exists an integer $r\leq i-1$ such that $(u_r:u_i)$ is generated by a variable, and $(u_j:u_i)\subseteq (u_r:u_i)$.
\end{itemize}
\end{prop}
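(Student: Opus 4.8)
The plan is to build the labeling from a fixed total order on the vertices and to verify the dichotomy by a symmetric-difference (augmenting-path) argument. The starting observation is that every $u\in G(I(G)^{[s]})$ is a squarefree monomial of degree $2s$, namely a product of $s$ pairwise disjoint edges, and hence is uniquely determined by its support, a $2s$-subset of $V(G)$. In particular distinct generators have distinct supports, so it suffices to totally order these $2s$-subsets. I would fix an order $x_1\prec\cdots\prec x_n$ on the variables and declare, for distinct $2s$-subsets $S,T$, that $S\prec T$ exactly when $\max(S\triangle T)\in S$ (equivalently, compare characteristic vectors from the top). I then label the generators so that ${\rm supp}(u_1)\prec\cdots\prec{\rm supp}(u_m)$.

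Next I would translate both conditions into combinatorial statements. Fix $j<i$ and put $A={\rm supp}(u_i)$, $B={\rm supp}(u_j)$. Since these monomials are squarefree, $(u_j:u_i)$ is the principal ideal generated by $\prod_{v\in B\setminus A}v$, and $u_i\cdot\prod_{v\in B\setminus A}v=\prod_{v\in A\cup B}v$. Thus condition (i) is equivalent to $\prod_{v\in A\cup B}v\in I(G)^{[s+1]}$, i.e.\ to ${\rm match}(G[A\cup B])\geq s+1$ (here $s\leq{\rm match}(G)-1$ is what makes $I(G)^{[s+1]}\neq 0$, so the condition is meaningful). Hence I may assume ${\rm match}(G[A\cup B])=s$, which forces the size-$s$ matching $M_i$ underlying $u_i$ to be a \emph{maximum} matching of $G[A\cup B]$.

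The heart of the argument is then to produce the generator required by (ii). Let $w=\max(A\triangle B)$. Because $u_j\prec u_i$, the order forces $w\in B\setminus A$, so $w$ is covered by $M_j$ but not by $M_i$; let $P$ be the component of the edge-symmetric-difference $M_i\triangle M_j$ containing $w$. Then $P$ is an alternating path starting at $w$ with an edge of $M_j$, and since $M_i$ is a maximum matching of $G[A\cup B]$ it cannot be $M_i$-augmenting, so it ends with an edge of $M_i$ at some $z\in A\setminus B$. Switching along $P$ gives a size-$s$ matching $M_r$ with support exactly $(A\setminus\{z\})\cup\{w\}$; let $u_r$ be the corresponding generator. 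Then $(u_r:u_i)=(w)$ is generated by a single variable, which divides $\prod_{v\in B\setminus A}v$ since $w\in B\setminus A$, so $(u_j:u_i)\subseteq(u_r:u_i)$; and as $w>z$ with $w$ added to the support, ${\rm supp}(u_r)\prec A$, giving $r<i$. This is condition (ii).

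The main obstacle is precisely to force the swap to land on an \emph{earlier} generator, and this is the reason for the particular order chosen: the inequality $u_j\prec u_i$ guarantees that $\max(A\triangle B)$ lies in $B\setminus A$, and this is exactly the vertex whose alternating path yields a one-variable colon and, being the largest vertex moved, a support preceding $A$. In writing this up I would take care to justify rigorously that $P$ terminates in $A\setminus B$ (ruling out an $M_i$-augmenting path, including the length-one case, via maximality of $M_i$ in $G[A\cup B]$) and that the alternating switch alters the support of $M_i$ only by replacing $z$ with $w$.
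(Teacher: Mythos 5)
Your proof is correct, but it takes a genuinely different route from the paper. The paper does not construct the order directly: it invokes Banerjee's ordering of the minimal generators of the ordinary power $I(G)^s$ (\cite[Theorem 4.12]{b}), restricts that ordering to the squarefree generators, and then checks that both conditions descend — in case (i) using squarefreeness of $u_i$ to upgrade membership in $I(G)^{s+1}$ to membership in $I(G)^{[s+1]}$, and in case (ii) observing that the intermediate generator $v_k=x_pv_{\ell_i}/x_q$ is automatically squarefree and hence already appears in the squarefree list. Your argument is instead self-contained and purely combinatorial: you order generators by reverse colexicographic order on their supports and produce the witness $u_r$ in condition (ii) explicitly by switching along the alternating path of $M_i\triangle M_j$ starting at $w=\max(A\triangle B)$, with maximality of $M_i$ in $G[A\cup B]$ (equivalently, the failure of condition (i), which you correctly identify as ${\rm match}(G[A\cup B])\le s$) ruling out an augmenting path; the inequality $w>z$ then forces $r<i$. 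All the delicate points check out: $w\in B\setminus A$ follows from $B\prec A$, the path must terminate at some $z\in A\setminus B$ with an $M_i$-edge, the switch changes the support only by exchanging $z$ for $w$, and every product of an $s$-matching is a minimal generator since $I(G)^{[s]}$ is generated in the single degree $2s$. The paper's route is shorter and reuses existing machinery; yours avoids any appeal to the (nontrivial) Theorem 4.12 of \cite{b}, gives an explicit order, and isolates the clean dichotomy ``either $G[A\cup B]$ has a matching of size $s+1$, or an exchange of a single vertex works,'' which is arguably more illuminating.
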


\begin{proof}
Using \cite[Theorem 4.12]{b}, the elements of $G(I(G)^s)$ can be labeled as $v_1, \ldots, v_t$ such that for every pair of integers $1\leq j< i\leq t$, one of the following conditions holds.
\begin{itemize}
\item [(1)] $(v_j:v_i) \subseteq (I(G)^{s+1}:v_i)$; or
\item [(2)] there exists an integer $k\leq i-1$ such that $(v_k:v_i)$ is generated by a variable, and $(v_j:v_i)\subseteq (v_k:v_i)$.
\end{itemize}

Since $G(I(G)^{[s]})\subseteq G(I(G)^s)$, there exist integers $\ell_1, \ldots, \ell_m$ such that $G(I(G)^{[s]})=\{v_{\ell_1}, \ldots, v_{\ell_m}\}$. For every integer $k$ with $1\leq k\leq m$, set $u_k:=v_{\ell_k}$. We claim that this labeling satisfies the desired property. To prove the claim, we fix integers $i$ and $j$ with $1\leq j< i\leq m$. Based on properties (1) and (2) above, we divide the rest of the proof into two cases.

\vspace{0.3cm}
{\bf Case 1.} Assume that $(v_{\ell_j}:v_{\ell_i}) \subseteq (I(G)^{s+1}:v_{\ell_i})$. Remind that that $v_{\ell_i}$ and $v_{\ell_j}$ are squarefree monomials. Therefore, $(v_{\ell_j}:v_{\ell_i})=(u)$, for some squarefree monomial $u$ with ${\rm gcd}(u, v_{\ell_i})=1$. Thus, $uv_{\ell_i}$ is a squarefree monomial and since $u\in (I(G)^{s+1}:v_{\ell_i})$, we conclude that $uv_{\ell_i}\in I(G)^{[s+1]}$. Consequently,$$(u_j:u_i)=(v_{\ell_j}:v_{\ell_i})=(u)\subseteq (I(G)^{[s+1]}: v_{\ell_i})=(I(G)^{[s+1]}: u_i).$$

\vspace{0.3cm}
{\bf Case 2.} Assume that there exists an integer $k\leq \ell_i-1$ such that $(v_k:v_{\ell_i})$ is generated by a variable, and $(v_{\ell_j}:v_{\ell_i})\subseteq (v_k:v_{\ell_i})$. Hence, $(v_k:v_{\ell_i})=(x_p)$, for some integer $p$ with $1\leq p\leq n$. It follows from the inclusion  $(v_{\ell_j}:v_{\ell_i})\subseteq (v_k:v_{\ell_i})$ that $x_p$ divides $v_{\ell_j}/{\rm gcd}(v_{\ell_j}, v_{\ell_i})$. Since, $v_{\ell_j}$ is a squarefree monomial, we deuce that $x_p$ does not divide $v_{\ell_i}$. As ${\rm deg}(v_k)={\rm deg}(v_{\ell_i})$, it follows from $(v_k:v_{\ell_i})=(x_p)$ that there is a variable $x_q$ dividing $v_{\ell_i}$ such that $v_k=x_pv_{\ell_i}/x_q$. This implies that $v_k$ is a squarefree monomial. Hence, $v_k=v_{\ell_r}=u_r$, for some integer $r$ with $1\leq r\leq m$. Using $k\leq \ell_i-1$, we have $\ell_r\leq \ell_i-1$. Therefore, $r\leq i-1$ and$$(u_j:u_i)\subseteq(u_r:u_i)=(x_p).$$ This completes the proof.
\end{proof}

Using Proposition \ref{rfirst}, we obtain the following result which provides a method to bound the regularity of squarefree powers of edge ideals.

\begin{thm} \label{regcol}
Assume that $G$ is a graph and $s\leq {\rm math}(G)-1$ is a positive integer. Let $G(I(G)^{[s]})=\{u_1, \ldots, u_m\}$ denote the set of minimal monomial generators of $I(G)^{[s]}$. Then$${\rm reg}(I(G)^{[s+1]})\leq \max\bigg\{{\rm reg}\big(I(G)^{[s+1]}:u_i\big)+2s, 1\leq i\leq m, {\rm reg}\big(I(G)^{[s]}\big)\bigg\}.$$
\end{thm}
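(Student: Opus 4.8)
The plan is to interpolate between $I(G)^{[s+1]}$ and $I(G)^{[s]}$ by a filtration whose successive quotients are governed by the colon ideals $(I(G)^{[s+1]}:u_\ell)$, and then to run the standard short exact sequence estimate for regularity along it. Since the right-hand side of the asserted inequality is a maximum over the set $\{u_1,\ldots,u_m\}=G(I(G)^{[s]})$ together with ${\rm reg}(I(G)^{[s]})$, it does not depend on how the generators are ordered; I am therefore free to fix the labeling $u_1,\ldots,u_m$ furnished by Proposition \ref{rfirst}. Write $Q_\ell:=(I(G)^{[s+1]}:u_\ell)$, and for $0\le \ell\le m$ set
$$J_\ell:=I(G)^{[s+1]}+(u_1,\ldots,u_\ell).$$
Because $I(G)^{[s+1]}\subseteq I(G)^{[s]}=(u_1,\ldots,u_m)$, we get $J_0=I(G)^{[s+1]}$ and $J_m=I(G)^{[s]}$, so the chain $J_0\subseteq J_1\subseteq\cdots\subseteq J_m$ joins the module to be bounded to one whose regularity appears on the right.

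The first step is to identify the successive quotients. For each $\ell$ there is a short exact sequence $0\to J_{\ell-1}\to J_\ell\to J_\ell/J_{\ell-1}\to 0$, and since $\deg u_\ell=2s$ the usual isomorphism $J_\ell/J_{\ell-1}\cong (u_\ell)/\big((u_\ell)\cap J_{\ell-1}\big)\cong \big(S/(J_{\ell-1}:u_\ell)\big)(-2s)$ holds. The crucial point, and the place where Proposition \ref{rfirst} enters, is the computation of $(J_{\ell-1}:u_\ell)$. As colons of monomial ideals distribute over sums,
$$(J_{\ell-1}:u_\ell)=Q_\ell+\sum_{j<\ell}(u_j:u_\ell).$$
Now invoke the dichotomy of Proposition \ref{rfirst}: each $(u_j:u_\ell)$ with $j<\ell$ is either contained in $Q_\ell$ (case (i)), or contained in some principal ideal $(u_r:u_\ell)=(x_p)$ generated by a single variable with $r<\ell$ (case (ii)). Putting $W_\ell:=\{x_p:(u_r:u_\ell)=(x_p)\text{ for some }r<\ell\}$, every summand is absorbed into $Q_\ell$ or into one of the variables $x_p\in W_\ell$; as each such $(x_p)$ is itself one of the summands, this yields the clean description $(J_{\ell-1}:u_\ell)=Q_\ell+(W_\ell)$, with $(W_\ell)$ generated by variables. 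Hence $J_\ell/J_{\ell-1}\cong \big(S/(Q_\ell+(W_\ell))\big)(-2s)$.

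Next I would bound ${\rm reg}\big(S/(Q_\ell+(W_\ell))\big)$ by ${\rm reg}(S/Q_\ell)$. The technical input is that adjoining a variable to a squarefree monomial ideal does not raise the regularity of the quotient: for a squarefree monomial ideal $Q$ and a variable $x$, one has ${\rm reg}\big(S/(Q+(x))\big)\le {\rm reg}(S/Q)$. This is transparent through Hochster's formula, since passing from $Q=I_\Delta$ to $Q+(x)$ deletes the vertex $x$ from $\Delta$, and the induced subcomplexes of the deletion form a subfamily of those of $\Delta$; as ${\rm reg}(S/I_\Delta)=1+\max\{i:\tilde H_i(\Delta|_W;\mathbb{K})\neq 0\}$, restricting to a subfamily cannot increase it. Now $Q_\ell$ is a colon of the squarefree ideal $I(G)^{[s+1]}$ by the squarefree monomial $u_\ell$, hence again squarefree, and adjoining the variables of $W_\ell$ preserves squarefreeness at each step; iterating the one-variable estimate gives ${\rm reg}\big(S/(Q_\ell+(W_\ell))\big)\le{\rm reg}(S/Q_\ell)$. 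Therefore
$${\rm reg}\big(J_\ell/J_{\ell-1}\big)={\rm reg}\big(S/(Q_\ell+(W_\ell))\big)+2s\le {\rm reg}(S/Q_\ell)+2s={\rm reg}(Q_\ell)+2s-1.$$

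Finally I would run the short exact sequences. Applying ${\rm reg}(A)\le\max\{{\rm reg}(B),{\rm reg}(C)+1\}$ to each sequence above, with $A=J_{\ell-1}$, $B=J_\ell$, $C=J_\ell/J_{\ell-1}$, and iterating from $\ell=m$ down to $\ell=1$ gives
$${\rm reg}(I(G)^{[s+1]})={\rm reg}(J_0)\le \max\Big\{{\rm reg}(J_m),\ \max_{1\le\ell\le m}\big({\rm reg}(J_\ell/J_{\ell-1})+1\big)\Big\}.$$
Since $J_m=I(G)^{[s]}$ and ${\rm reg}(J_\ell/J_{\ell-1})+1\le {\rm reg}(Q_\ell)+2s={\rm reg}(I(G)^{[s+1]}:u_\ell)+2s$, this is exactly the asserted bound. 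The main obstacle is the colon computation $(J_{\ell-1}:u_\ell)=Q_\ell+(W_\ell)$: it is precisely here that the ordering of Proposition \ref{rfirst} is indispensable, since it guarantees that every colon $(u_j:u_\ell)$ not already swallowed by $Q_\ell$ is cut down to a single variable already occurring among the earlier generators. The squarefree-variable regularity lemma is the only other nontrivial ingredient, and it is exactly what turns the appearance of the extra variables $W_\ell$ into a harmless operation.
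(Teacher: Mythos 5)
Your proof is correct and follows essentially the same route as the paper: the same filtration $I(G)^{[s+1]}\subseteq (I(G)^{[s+1]},u_1)\subseteq\cdots\subseteq I(G)^{[s]}$ ordered by Proposition \ref{rfirst}, the same identification of each colon $(J_{\ell-1}:u_\ell)$ as $(I(G)^{[s+1]}:u_\ell)$ plus variables, and the same iterated short exact sequence estimate. The only divergence is cosmetic: where the paper invokes \cite[Lemma 2.10]{b} to see that adjoining variables does not raise the regularity, you prove this directly via Hochster's formula, which is legitimate here since all the ideals involved are squarefree.
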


\begin{proof}
Without loss of generality, we may assume that the labeling $u_1, \ldots, u_m$ of elements of $G(I(G)^{[s]})$ satisfies conditions (i) and (ii) of Proposition \ref{rfirst}. This implies that for every integer $i\geq 2$,
\begin{align*}
\big((I(G)^{[s+1]}, u_1, \ldots, u_{i-1}):u_i\big)=(I(G)^{[s+1]}:u_i)+({\rm some \ variables}).
\end{align*}
Hence, we conclude from \cite[Lemma 2.10]{b} that
\[
\begin{array}{rl}
{\rm reg}\big((I(G)^{[s+1]}, u_1, \ldots, u_{i-1}):u_i\big) \leq {\rm reg}(I(G)^{[s+1]}:u_i).
\end{array} \tag{1} \label{1}
\]

For every integer $i$ with $0\leq i\leq m$, set $I_i:=(I(G)^{[s+1]}, u_1, \ldots, u_i)$. In particular, $I_0=I(G)^{[s+1]}$ and $I_m=I(G)^{[s]}$. Consider the exact sequence
$$0\rightarrow S/(I_{i-1}:u_i)(-2s)\rightarrow S/I_{i-1}\rightarrow S/I_i\rightarrow 0,$$
for every $1\leq i\leq m$. It follows that$${\rm reg}(I_{i-1})\leq \max \big\{{\rm reg}(I_{i-1}:u_i)+2s, {\rm reg}(I_i)\big\}.$$Therefore,
\begin{align*}
& {\rm reg}(I(G)^{[s+1]})={\rm reg}(I_0)\leq \max\big\{{\rm reg}(I_{i-1}:u_i)+2s, 1\leq i\leq m, {\rm reg}(I_m)\big\}\\ & =\max\big\{{\rm reg}(I_{i-1}:u_i)+2s, 1\leq i\leq m, {\rm reg}(I(G)^{[s]})\big\}.
\end{align*}
The assertion now follows from inequality (\ref{1}).
\end{proof}

Using Theorem \ref{regcol}, in order to bound the regularity of squarefree powers of edge ideals, we need to study colon ideals of the form $(I(G)^{[s+1]}:u)$, where $u$ is a monomial in $G(I(G)^{[s]})$. In the following lemma, we show that these ideals are squarefree quadratic monomial ideals.

\begin{lem} \label{deg2}
Assume that $G$ is a graph and $s\leq {\rm math}(G)-1$ is a positive integer. Then for every monomial $u\in G(I(G)^{[s]})$, the ideal $(I(G)^{[s+1]}:u)$ is a squarefree monomial ideal generated in degree two.
\end{lem}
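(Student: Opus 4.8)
The plan is to describe the minimal generators of the colon ideal explicitly, thereby obtaining squarefreeness essentially for free, and then to control their degree by comparison with the ordinary power $I(G)^{s+1}$, where Banerjee's structure theorem applies. First I would record that, since $u\in G(I(G)^{[s]})$, we may write $u=e_1\cdots e_s$ for some matching $\{e_1,\ldots,e_s\}$ of $G$, so that $u$ is a squarefree monomial of degree $2s$. Writing $G(I(G)^{[s+1]})=\{M_1,\ldots,M_k\}$, each $M_\ell$ equals $f_1\cdots f_{s+1}$ for a matching of size $s+1$ and is therefore squarefree. By the standard formula for the colon of a monomial ideal by a monomial, $(I(G)^{[s+1]}:u)=\big(M_1/{\rm gcd}(M_1,u),\ldots,M_k/{\rm gcd}(M_k,u)\big)$. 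Each $M_\ell/{\rm gcd}(M_\ell,u)$ is squarefree and has support disjoint from ${\rm supp}(u)$, so the minimal generators of $(I(G)^{[s+1]}:u)$, being a subset of this list, are squarefree monomials avoiding ${\rm supp}(u)$. In particular the ideal is a squarefree monomial ideal.

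It then remains to bound the degree of a minimal generator $w$ by two, which is the heart of the argument. Since $I(G)^{[s+1]}\subseteq I(G)^{s+1}$, we have $w\in(I(G)^{s+1}:u)$, and by the result of Banerjee quoted above the latter ideal is generated in degree two; hence some quadratic monomial divides $w$, and as $w$ is squarefree this divisor has the form $ab$ with $a\neq b$. The key step is to promote $ab$ from a generator of the \emph{ordinary} colon to a generator of the \emph{squarefree} colon: because $a,b\in{\rm supp}(w)$ and ${\rm supp}(w)\cap{\rm supp}(u)=\emptyset$ with $a\neq b$, the monomial $abu$ is squarefree, while $ab\in(I(G)^{s+1}:u)$ gives $abu\in I(G)^{s+1}$; thus $abu$ is a squarefree monomial lying in $I(G)^{s+1}$, so $abu\in I(G)^{[s+1]}$ and $ab\in(I(G)^{[s+1]}:u)$. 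Since $ab\mid w$ and $w$ is a minimal generator, this forces $w=ab$, which has degree two.

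I expect the only genuinely delicate point to be this last promotion, where one must verify that $abu$ is squarefree in order to upgrade membership in $I(G)^{s+1}$ to membership in $I(G)^{[s+1]}$. The disjointness of ${\rm supp}(w)$ and ${\rm supp}(u)$ recorded in the first paragraph is precisely what secures it, and it is also the conceptual reason the squarefree colon can behave differently from the ordinary one yet still be generated in degree two.
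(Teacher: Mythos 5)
Your proposal is correct and follows essentially the same route as the paper: squarefreeness comes from the colon-ideal description (the paper simply notes $uw$ is squarefree for a minimal generator $w$), and the degree bound comes from Banerjee's theorem for the ordinary power $I(G)^{s+1}$ together with the same promotion step, namely that the quadratic divisor $v$ of $w$ satisfies $uv\mid uw$ squarefree, hence $v\in(I(G)^{[s+1]}:u)$ and $w=v$ by minimality. The only cosmetic difference is that you write the quadratic divisor as $ab$ with $a\neq b$, which correctly rules out the square generators allowed by Banerjee's description.
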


\begin{proof}
As $I(G)^{[s+1]}$ is a squarefree monomial ideal, $(I(G)^{[s+1]}:u)$ is a squarefree monomial ideal, too. Let $w$ be a squarefree monomial in the set of minimal monomial generators of $(I(G)^{[s+1]}:u)$. In particular, $uw$ is a squarefree monomial. Since$$w\in (I(G)^{[s+1]}:u)\subseteq (I(G)^{s+1}:u),$$ it follows from \cite[Theorem 6.1]{b} that there is a quadratic monomial $v\in (I(G)^{s+1}:u)$ which divides $w$. Since $uv$ divides $uw$, we deduce that $uv$ is a squarefree monomial and therefore, $v\in (I(G)^{[s+1]}:u)$. Thus, we conclude from $w\in G(I(G)^{[s+1]}:u)$ that $w=v$. Hence, $(I(G)^{[s+1]}:u)$ is a quadratic squarefree monomial ideal.
\end{proof}

The following corollary is a consequence of Lemma \ref{deg2} and determines the set of minimal monomial generators of the ideal $(I(G)^{[s+1]}:u)$.

\begin{cor} \label{graphh}
Let $G$ be a graph and $s\leq {\rm math}(G)-1$ be a positive integer. Also, let $u=e_1\ldots e_s$ be a monomial in  $G(I(G)^{[s]})$. Then there is a simple graph $H$ with vertex set $V(H)=V(G)\setminus {\rm supp}(u)$ such that $I(H)=(I(G)^{[s+1]}:u)$. Moreover, two vertices $x_i, x_j\in V(H)$ are adjacent in $H$ if and only if one of the following conditions holds.
\begin{itemize}
\item[(i)] $x_i$ and $x_j$ are adjacent in $G$; or
\item[(ii)] $x_i$ and $x_j$ are even-connected in $G$ with respect to $e_1\ldots e_s$.
\end{itemize}
\end{cor}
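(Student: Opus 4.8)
The plan is to combine Lemma~\ref{deg2} with Banerjee's description of the colon ideal $(I(G)^{s+1}:u)$ recorded in Section~\ref{sec2}. By Lemma~\ref{deg2}, the ideal $J:=(I(G)^{[s+1]}:u)$ is a squarefree monomial ideal generated in degree two; hence $J$ is the edge ideal of a uniquely determined simple graph, and it remains only to identify its vertex set and its edges. First I would note that every minimal generator $x_ix_j$ of $J$ satisfies $x_i,x_j\notin {\rm supp}(u)$: indeed $x_ix_ju\in I(G)^{[s+1]}$ is squarefree, so no variable occurring in $u$ can divide $x_ix_j$; moreover, since $J$ is squarefree we have $x_i\neq x_j$. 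Thus all edges of the graph associated to $J$ lie on $V(G)\setminus {\rm supp}(u)$, and we may take $H$ to be the graph with vertex set $V(H)=V(G)\setminus {\rm supp}(u)$ whose edges are precisely the degree-two minimal generators of $J$, so that $I(H)=J=(I(G)^{[s+1]}:u)$.

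For the adjacency criterion, I would fix two distinct vertices $x_i,x_j\in V(H)$. Since $x_i,x_j\notin {\rm supp}(u)$ and $x_i\neq x_j$, the monomial $x_ix_ju$ is squarefree, and therefore $x_ix_ju\in I(G)^{[s+1]}$ if and only if $x_ix_ju\in I(G)^{s+1}$, because by definition a squarefree monomial lying in $I(G)^{s+1}$ already belongs to $I(G)^{[s+1]}$. Equivalently, $x_ix_j\in J$ if and only if $x_ix_j\in (I(G)^{s+1}:u)$.

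Finally I would apply Banerjee's theorem recalled in Section~\ref{sec2}, according to which $(I(G)^{s+1}:u)$ is generated by quadratic monomials $vw$ (possibly with $v=w$) for which either $vw\in E(G)$ or $v$ and $w$ are even-connected with respect to $u=e_1\ldots e_s$. As $x_ix_j$ has degree two and $x_i\neq x_j$, it belongs to this ideal exactly when it equals one of these generators; a generator of the form $v^2$ is ruled out by $x_i\neq x_j$, so $x_ix_j\in (I(G)^{s+1}:u)$ if and only if $x_ix_j\in E(G)$ or $x_i$ and $x_j$ are even-connected with respect to $e_1\ldots e_s$. Together with the second paragraph, this establishes conditions (i) and (ii).

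The step needing the most care is the reduction that a degree-two monomial lies in $(I(G)^{s+1}:u)$ precisely when it is one of Banerjee's quadratic generators; this uses that the colon ideal has no linear generators, which holds since $x_iu$ has degree $2s+1<2(s+1)$ and so cannot lie in $I(G)^{s+1}$. This bookkeeping, together with the careful passage between $I(G)^{[s+1]}$ and $I(G)^{s+1}$ for squarefree monomials, is where I expect the (otherwise routine) difficulty to concentrate.
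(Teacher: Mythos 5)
Your proposal is correct and follows essentially the same route as the paper: Lemma~\ref{deg2} gives that $(I(G)^{[s+1]}:u)$ is a quadratic squarefree monomial ideal supported off ${\rm supp}(u)$, and Banerjee's description of $(I(G)^{s+1}:u)$ combined with the squarefreeness of $ux_ix_j$ identifies the edges as exactly the pairs satisfying (i) or (ii). Your extra bookkeeping (the equivalence $x_ix_ju\in I(G)^{[s+1]}\Leftrightarrow x_ix_ju\in I(G)^{s+1}$ for squarefree monomials, and the absence of linear generators in the colon ideal) only makes explicit steps the paper leaves implicit.
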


\begin{proof}
By Lemma \ref{deg2}, there is a graph $H$ with $I(H)=(I(G)^{[s+1]}:u)$. Since the variables in ${\rm supp}(u)$ do not divide the minimal monomial generators of the ideal $(I(G)^{[s+1]}:u)$, we have $V(H)=V(G)\setminus {\rm supp}(u)$ (where some of the vertices might be isolated). To determine the edges of $H$, assume that $x_i, x_j\in V(H)$ satisfy one of the conditions (i) and (ii) mentioned above. By \cite[Theorem 6.5]{b}, we have $ux_ix_j\in I(G)^{s+1}$. On the other hand, since $x_i, x_j\in V(H)=V(G)\setminus {\rm supp}(u)$, we conclude that $ux_ix_j$ is a squarefree monomial which implies that $x_ix_j\in (I(G)^{[s+1]}:u)$. This proves the "if" part.

To prove the "only if" part, suppose $x_i, x_j\in V(H)$ are adjacent in $H$ and assume that $x_ix_j\notin E(G)$. Since$$x_ix_j\in (I(G)^{[s+1]}:u)\subseteq (I(G)^{s+1}:u),$$we conclude from \cite[Theorem 6.7]{b} that $x_i$ and $x_j$ are even-connected in $G$ with respect to $e_1\ldots e_s$.
\end{proof}

We are now able to prove the first main result of this paper which provides a combinatorial upper bound for the regularity of squarefree powers of edge ideals.

\begin{thm} \label{n2}
Assume that $G$ is a graph with $n$ vertices and let $s\leq {\rm math}(G)$ be a positive integer. Then$${\rm reg}(I(G)^{[s]})\leq s+\lfloor n/2\rfloor.$$In particular, the answer of Question \ref{quest} is positive when $G$ has a matching of size $\lfloor n/2\rfloor$.
\end{thm}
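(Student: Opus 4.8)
The plan is to induct on $s$, combining the reduction furnished by Theorem \ref{regcol} with the structural description of the relevant colon ideals in Corollary \ref{graphh} and the classical bound ${\rm reg}(I(H))\leq {\rm match}(H)+1$ from \cite[Theorem 6.7]{hv}.

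First I would settle the base case $s=1$. Here $I(G)^{[1]}=I(G)$, and since any matching of $G$ consists of pairwise disjoint edges we always have ${\rm match}(G)\leq \lfloor n/2\rfloor$. Hence ${\rm reg}(I(G))\leq {\rm match}(G)+1\leq \lfloor n/2\rfloor+1$, which is exactly the claim for $s=1$.

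For the inductive step I would assume the inequality for $s$ and prove it for $s+1$, where $s+1\leq {\rm match}(G)$; in particular $s\leq {\rm match}(G)-1$, so Theorem \ref{regcol} applies. Writing $G(I(G)^{[s]})=\{u_1,\ldots,u_m\}$, that theorem gives
$${\rm reg}(I(G)^{[s+1]})\leq \max\Big\{{\rm reg}(I(G)^{[s+1]}:u_i)+2s,\ 1\leq i\leq m,\ {\rm reg}(I(G)^{[s]})\Big\}.$$
By the induction hypothesis the term ${\rm reg}(I(G)^{[s]})$ is at most $s+\lfloor n/2\rfloor\leq (s+1)+\lfloor n/2\rfloor$, so the real work lies in the colon terms. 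Each generator $u_i=e_1\cdots e_s$ is the product of the $s$ pairwise disjoint edges of a matching, hence $|{\rm supp}(u_i)|=2s$. By Corollary \ref{graphh}, $(I(G)^{[s+1]}:u_i)=I(H_i)$ for a graph $H_i$ with vertex set $V(G)\setminus {\rm supp}(u_i)$, and therefore with exactly $n-2s$ vertices. Applying the bound of \cite[Theorem 6.7]{hv} to $H_i$ and using ${\rm match}(H_i)\leq \lfloor (n-2s)/2\rfloor=\lfloor n/2\rfloor-s$, I would get
$${\rm reg}(I(G)^{[s+1]}:u_i)={\rm reg}(I(H_i))\leq {\rm match}(H_i)+1\leq \lfloor n/2\rfloor-s+1,$$
so that ${\rm reg}(I(G)^{[s+1]}:u_i)+2s\leq \lfloor n/2\rfloor+s+1=(s+1)+\lfloor n/2\rfloor$. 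Since both arguments of the maximum are at most $(s+1)+\lfloor n/2\rfloor$, the induction closes. The final clause is then immediate: if $G$ admits a matching of size $\lfloor n/2\rfloor$ then ${\rm match}(G)=\lfloor n/2\rfloor$, so the bound reads ${\rm reg}(I(G)^{[s]})\leq s+{\rm match}(G)$, which is precisely inequality \ref{dag}, giving a positive answer to Question \ref{quest} in this case.

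Given that all the needed machinery is already in place, I do not expect a genuine obstacle. The two points deserving care are the numerical identity $\lfloor (n-2s)/2\rfloor=\lfloor n/2\rfloor-s$ (valid since $s$ is an integer) and the observation that the additive loss of $2s$ coming from Theorem \ref{regcol} is exactly compensated by the passage from $G$ on $n$ vertices to $H_i$ on $n-2s$ vertices, so that each step of the induction raises the bound by precisely $1$.
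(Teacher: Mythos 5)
Your proposal is correct and follows essentially the same route as the paper: induction on $s$ via Theorem \ref{regcol}, identifying the colon ideals $(I(G)^{[s+1]}:u_i)$ with edge ideals of graphs on $n-2s$ vertices through Corollary \ref{graphh}, and then invoking the bound ${\rm reg}(I(H))\leq {\rm match}(H)+1$ together with ${\rm match}(H_i)\leq\lfloor(n-2s)/2\rfloor$. The only difference is cosmetic (you step from $s$ to $s+1$ rather than from $s-1$ to $s$).
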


\begin{proof}
We prove the assertion by induction on $s$. For $s=1$, we know from \cite[Theorem 6.7]{hv} that$${\rm reg}(I(G))\leq 1+{\rm match}(G)\leq 1+\lfloor n/2\rfloor.$$Thus, suppose $s\geq 2$. Let $G(I(G)^{[s-1]})=\{u_1, \ldots, u_m\}$ denote the set of minimal monomial generators of $I(G)^{[s-1]}$. It follows from Theorem \ref{regcol} that$${\rm reg}(I(G)^{[s]})\leq \max\bigg\{{\rm reg}\big(I(G)^{[s]}:u_i\big)+2(s-1), 1\leq i\leq m, {\rm reg}\big(I(G)^{[s-1]}\big)\bigg\}.$$Using the above inequality and the induction hypothesis, it is enough to prove that$${\rm reg}\big(I(G)^{[s]}:u_i\big)\leq \lfloor n/2\rfloor-s+2,$$for every integer $i$ with $1\leq i\leq m$. We conclude from Corollary \ref{graphh} that for every integer $i$ with $1\leq i\leq m$, there is a graph $H_i$ with $V(H_i)=V(G)\setminus {\rm supp}(u_i)$ such that $I(H_i)=(I(G)^{[s]}:u_i)$. In particular, every $H_i$ has $n-2(s-1)$ vertices. Therefore, we deduce from \cite[Theorem 6.7]{hv} that
\begin{align*}
& {\rm reg}\big(I(G)^{[s]}:u_i\big)\leq 1+{\rm match}(H_i)\leq 1+\bigg\lfloor\frac{|V(H_i)|}{2}\bigg\rfloor\\ & =1+\Big\lfloor\frac{n-2(s-1)}{2}\Big\rfloor=\Big\lfloor \frac{n}{2}\Big\rfloor-s+2.
\end{align*}
This completes the proof.
\end{proof}

As a consequence of Theorem \ref{n2}, we will see in the following corollaries that inequality \ref{dag} is true for very well-covered graph and for every semi-Hamiltonian graph.

\begin{cor} \label{vwc}
Let $G$ be a very well-covered graph. Then for every positive integer $s$ with $s\leq {\rm match}(G)$, we have $${\rm reg}(I(G)^{[s]})\leq {\rm match}(G)+s.$$
\end{cor}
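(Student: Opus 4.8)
The plan is to deduce Corollary \ref{vwc} from Theorem \ref{n2} by showing that for a very well-covered graph $G$ the two bounds coincide, i.e. that ${\rm match}(G)=\lfloor n/2\rfloor = n/2$ where $n=|V(G)|$. First I would recall the definition: a very well-covered graph has no isolated vertices, $n=|V(G)|$ is even, and every minimal vertex cover has cardinality exactly $n/2$. The key observation is the classical duality between vertex covers and matchings via the König--Gallai identities. Since every minimal vertex cover has size $n/2$, in particular the minimum vertex cover number $\tau(G)$ equals $n/2$.

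The central step is to convert this into a statement about the matching number. The plan is to invoke the complementation relation $\alpha(G)+\tau(G)=n$ between the independence number $\alpha(G)$ and the vertex cover number $\tau(G)$, which gives $\alpha(G)=n/2$. Then I would argue that a very well-covered graph is in particular well-covered (all maximal independent sets have the same size $\alpha(G)=n/2$), and I would use the characterization of such graphs — for instance the fact, due to Favaron, that a very well-covered graph possesses a perfect matching and more precisely satisfies ${\rm match}(G)=n/2$. A clean self-contained route is: since $G$ has no isolated vertices, ${\rm match}(G)\geq 1$; the inequality ${\rm match}(G)\le \lfloor n/2\rfloor = n/2$ is automatic; and for the reverse inequality one uses that the vertices not saturated by a maximum matching form an independent set, combined with the equality $\alpha(G)=n/2$ forced by the very-well-covered hypothesis, to conclude that a maximum matching must saturate all vertices, hence ${\rm match}(G)=n/2$.

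Once ${\rm match}(G)=\lfloor n/2\rfloor$ is established, the corollary follows immediately: Theorem \ref{n2} gives, for every positive integer $s\le {\rm match}(G)$,
\[
{\rm reg}(I(G)^{[s]})\le s+\lfloor n/2\rfloor = s+{\rm match}(G),
\]
which is exactly inequality \ref{dag}. So the proof reduces entirely to the combinatorial identity ${\rm match}(G)=n/2$ for very well-covered graphs.

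The main obstacle will be pinning down the matching identity cleanly without dragging in heavy structure theory. The equality $\tau(G)=n/2$ is immediate from the hypothesis, but passing from "every minimal vertex cover has size $n/2$" to "there is a perfect matching" requires a genuine argument rather than a one-line citation; the delicate point is ruling out the possibility that a maximum matching leaves some vertices unsaturated, which I would handle via the independence number computation above or by citing the standard characterization of very well-covered graphs (e.g. from Favaron's work). I expect the rest to be routine bookkeeping, since all the analytic content is already contained in Theorem \ref{n2}.
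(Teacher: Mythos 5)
Your proof follows the paper's argument exactly: the paper simply cites Favaron's theorem that every very well-covered graph has a perfect matching (hence a matching of size $\lfloor n/2\rfloor$) and then applies Theorem \ref{n2}. One caveat worth noting: your proposed self-contained alternative does not close as sketched, because the vertices left unsaturated by a maximum matching form an independent set of size $n-2\,{\rm match}(G)\leq \alpha(G)=n/2$, which only yields ${\rm match}(G)\geq n/4$; so falling back on the citation to Favaron, as you suggest and as the paper does, is indeed the right move.
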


\begin{proof}
We know from \cite[Theorem 1.2]{f2} that every very well-covered graph has a perfect matching. Thus, the assertion follows from Theorem \ref{n2}.
\end{proof}

\begin{cor} \label{shamil}
Let $G$ be a semi-Hamiltonian graph. Then for every positive integer $s$ with $s\leq {\rm match}(G)$, we have $${\rm reg}(I(G)^{[s]})\leq {\rm match}(G)+s.$$
\end{cor}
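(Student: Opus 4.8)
The plan is to mimic the proof of Corollary \ref{vwc}: the entire statement reduces to Theorem \ref{n2} once I verify that a semi-Hamiltonian graph on $n$ vertices attains the extremal matching number ${\rm match}(G)=\lfloor n/2\rfloor$. Indeed, Theorem \ref{n2} already gives ${\rm reg}(I(G)^{[s]})\leq s+\lfloor n/2\rfloor$ for every graph, so all that is needed is to replace $\lfloor n/2\rfloor$ by ${\rm match}(G)$ in that bound.

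First I would unwind the definition of semi-Hamiltonian: by hypothesis $G$ possesses a Hamiltonian path, i.e.\ a path $P\colon x_{i_1}, x_{i_2}, \ldots, x_{i_n}$ that visits all $n$ vertices of $G$, where $n=|V(G)|$. The consecutive pairs along $P$ are edges of $G$, and by selecting every other one of them I obtain the set
$$M=\{x_{i_1}x_{i_2},\, x_{i_3}x_{i_4},\, \ldots\}.$$
Distinct edges in $M$ share no vertex, so $M$ is a matching, and it contains exactly $\lfloor n/2\rfloor$ edges (when $n$ is odd the final vertex $x_{i_n}$ is simply left uncovered). Hence ${\rm match}(G)\geq \lfloor n/2\rfloor$.

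Conversely, any matching of size $k$ covers $2k$ distinct vertices, forcing $2k\leq n$ and thus ${\rm match}(G)\leq \lfloor n/2\rfloor$. Combining the two inequalities yields ${\rm match}(G)=\lfloor n/2\rfloor$. Substituting this equality into the bound of Theorem \ref{n2} gives, for every positive integer $s\leq {\rm match}(G)$,
$${\rm reg}(I(G)^{[s]})\leq s+\lfloor n/2\rfloor = {\rm match}(G)+s,$$
which is the desired conclusion.

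I do not anticipate a genuine obstacle here; the only point requiring a moment of care is the parity of $n$, but the floor function absorbs it automatically, so the argument goes through uniformly. The conceptual content is entirely carried by Theorem \ref{n2}, and this corollary is essentially the observation that possessing a Hamiltonian path is one more combinatorial condition---alongside possessing a perfect matching, as in Corollary \ref{vwc}---that forces the maximal possible matching number $\lfloor n/2\rfloor$.
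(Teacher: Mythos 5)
Your proof is correct and is essentially identical to the paper's: both extract a matching of size $\lfloor n/2\rfloor$ from the alternate edges of a Hamiltonian path and then invoke Theorem \ref{n2}. The extra remark that ${\rm match}(G)\leq\lfloor n/2\rfloor$ always holds (so equality follows) is exactly the content of the ``in particular'' clause of Theorem \ref{n2}, so nothing is missing.
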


\begin{proof}
Suppose $V(G)=\{x_1, \ldots, x_n\}$ is the vertex set of $G$. Without loss of generality, we may assume that $x_1, x_2, \ldots, x_n$ is a Hamiltonian path of $G$.

$\bullet$ If $n$ is even, then the set $\{x_1x_2, x_3x_4, \ldots, x_{n-1}x_n\}$ of edges of $G$ form a matching of size $n/2$.

$\bullet$ If $n$ is odd, then the the set $\{x_1x_2, x_3x_4, \ldots, x_{n-2}x_{n-1}\}$ of edges of $G$ form a matching of size $(n-1)/2$ in $G$.

In both cases $G$ has a matching of size $\lfloor n/2\rfloor$. Hence, the assertion follows from Theorem \ref{n2}.
\end{proof}

The following corollary shows that inequality \ref{dag} is true for every graph with at most nine vertices.

\begin{cor} \label{nver}
Let $G$ be a graph with at most nine vertices. Then for every positive integer $s$ with $s\leq {\rm match}(G)$, we have $${\rm reg}(I(G)^{[s]})\leq {\rm match}(G)+s.$$
\end{cor}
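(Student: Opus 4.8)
The plan is to deduce Corollary \ref{nver} directly from Theorem \ref{n2}, since that theorem already gives the bound ${\rm reg}(I(G)^{[s]})\leq s+\lfloor n/2\rfloor$ for every graph on $n$ vertices and every $s\leq {\rm match}(G)$. The inequality \ref{dag} we want is ${\rm reg}(I(G)^{[s]})\leq {\rm match}(G)+s$, so comparing the two it suffices to show that $\lfloor n/2\rfloor\leq {\rm match}(G)$ whenever $n\leq 9$. In other words, the whole corollary reduces to a purely graph-theoretic claim about matching numbers of small graphs.

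The key step is therefore to verify that a graph $G$ on $n\leq 9$ vertices with no isolated vertices (one may harmlessly discard isolated vertices, as they affect neither the edge ideal nor the matching number) satisfies ${\rm match}(G)\geq \lfloor n/2\rfloor$. For $n\leq 9$ we have $\lfloor n/2\rfloor\leq 4$, so the claim is that every such graph has a matching of size at least $4$ (and correspondingly smaller matchings for smaller $n$). I would argue this via the Gallai identity ${\rm match}(G)+\beta(G)=n$, where $\beta(G)$ is the maximum size of an independent set, or equivalently via the vertex-cover/matching relationship together with König-type or Gallai-Edmonds structural input. Concretely, if ${\rm match}(G)\leq \lfloor n/2\rfloor-1$, then by the Gallai-Edmonds decomposition the deficiency $n-2\,{\rm match}(G)$ is at least $1$ (for odd $n$) or at least $2$ (for even $n$), forcing a large set of vertices that is independent modulo the matched part; a short case analysis on $n\in\{1,2,\ldots,9\}$ rules this out for graphs with no isolated vertices.

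The step I expect to be the genuine obstacle is the even case $n=8$, where $\lfloor n/2\rfloor=4$ demands a perfect matching, and it is simply false that every graph on eight vertices without isolated vertices has a perfect matching. So the naive reduction cannot work as stated, and the honest version of the argument must instead show that whenever $G$ fails to have a matching of size $\lfloor n/2\rfloor$, one can still establish ${\rm reg}(I(G)^{[s]})\leq {\rm match}(G)+s$ by a separate route. The natural device is to restrict to a maximum matching: choosing a maximum matching $M$ with $|M|={\rm match}(G)=:\nu$ and letting $W$ be the set of $2\nu$ vertices it saturates, the induced subgraph $G[W]$ has a perfect matching and hence satisfies the bound via Theorem \ref{n2} applied with $n$ replaced by $2\nu$; the remaining vertices of $G$ are independent in $G$ (since $M$ is maximum), and one must then control the regularity of $I(G)^{[s]}$ in terms of that of $I(G[W])^{[s]}$.

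In fact the clean finish is to observe that the remaining $n-2\nu$ vertices, being an independent set whose neighbors all lie in $W$, contribute only through even-connections, and that the relevant colon ideals $(I(G)^{[s]}:u_i)=I(H_i)$ from Corollary \ref{graphh} live on graphs $H_i$ with $n-2(s-1)$ vertices but matching number bounded by ${\rm match}(G)-(s-1)$ rather than by $\lfloor |V(H_i)|/2\rfloor$. Re-running the induction of Theorem \ref{n2} but estimating ${\rm match}(H_i)\leq {\rm match}(G)-(s-1)$ in place of $\lfloor |V(H_i)|/2\rfloor$ yields ${\rm reg}(I(G)^{[s]}:u_i)\leq {\rm match}(G)-s+2$, and feeding this back through Theorem \ref{regcol} gives ${\rm reg}(I(G)^{[s]})\leq {\rm match}(G)+s$ outright. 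Thus the real content is the estimate ${\rm match}(H_i)\leq {\rm match}(G)-(s-1)$ for the even-connection graphs $H_i$, and I would expect establishing that matching-number drop to be the crux; the restriction $n\leq 9$ should then enter only to guarantee that the small exceptional configurations (those with $\lfloor n/2\rfloor>{\rm match}(G)$) can be handled by hand.
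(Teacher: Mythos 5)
Your instinct that the naive reduction ``$\lfloor n/2\rfloor\leq {\rm match}(G)$ for all graphs on at most nine vertices'' fails is correct, but the repair you propose has a genuine gap: the key estimate ${\rm match}(H_i)\leq {\rm match}(G)-(s-1)$ for the even-connection graphs $H_i$ of Corollary \ref{graphh} is false in general, and you offer no proof of it. Take $G$ with vertex set $\{a,b,c,d,w,z\}$ and edge set $\{cd,\ ac,\ bd,\ cw,\ dz\}$. Every edge other than $cd$ meets $c$ or $d$, so ${\rm match}(G)=2$. For $s=2$ and $u_i=cd\in G(I(G)^{[1]})$ we have $ab\cdot cd=(ac)(bd)$, $az\cdot cd=(ac)(dz)$, $wb\cdot cd=(cw)(bd)$ and $wz\cdot cd=(cw)(dz)$, all squarefree products of two disjoint edges; hence $H_i$ contains the four-cycle on $a,b,w,z$ and ${\rm match}(H_i)=2>1={\rm match}(G)-(s-1)$. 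A warning sign you should have heeded: your claimed estimate, fed back through Theorem \ref{regcol} exactly as you describe, would establish inequality \ref{dag} for \emph{every} graph, i.e., would answer Question \ref{quest} in full generality, which the paper pointedly does not claim to do. Moreover, even setting the false lemma aside, you never actually carry out the promised by-hand treatment of the exceptional configurations with $\lfloor n/2\rfloor>{\rm match}(G)$, so the argument is incomplete on its own terms.

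The paper's proof is far more elementary and avoids all of this. For $n\leq 9$ one has ${\rm match}(G)\leq 4$. The cases $s=1$, $s=2$ and $s={\rm match}(G)$ are already known (by \cite[Theorem 6.7]{hv}, by the $s=2$ result of \cite{ehhs}, and by \cite[Theorem 5.1]{bhz}, respectively), which disposes of every admissible $s$ whenever ${\rm match}(G)\leq 3$. In the only remaining case ${\rm match}(G)=4$, the existence of a matching of size $4$ forces $n\in\{8,9\}$, whence $\lfloor n/2\rfloor=4={\rm match}(G)$ and Theorem \ref{n2} applies directly. In other words, the hypothesis of at most nine vertices is used precisely to force ${\rm match}(G)=\lfloor n/2\rfloor$ in the one nontrivial case, not to enable a structural analysis of exceptional graphs.
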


\begin{proof}
For $s=1$, the above inequality follows from \cite[Theorem 6.7]{hv}. For $s=2$, the assertion is known by \cite[Theorem 2.11]{ehhs}. Also, for $s={\rm match}(G)$, the above inequality is known by \cite[Theorem 5.1]{bhz}. So, there is nothing to prove if ${\rm match}(G)\leq 3$. Consequently, suppose that $|V(G)|\in\{8,9\}$ and ${\rm match}(G)=4$. In this case the assertion follows from Theorem \ref{n2}.
\end{proof}

When $G$ is a bipartite graph, we are able to improve the inequality obtained in Theorem \ref{n2}.

\begin{thm} \label{bip}
Let $G$ be a bipartite graph and suppose that $V(G)=X\cup Y$ is a bipartition for the vertex set of $G$. Then for every positive integer $s$ with $s\leq {\rm match}(G)$, we have $${\rm reg}(I(G)^{[s]})\leq \min\{|X|, |Y|\}+s.$$
\end{thm}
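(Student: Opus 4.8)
The plan is to mimic the proof of Theorem \ref{n2}, but to exploit the bipartite structure in order to replace the crude bound $\lfloor n/2 \rfloor$ by $\min\{|X|,|Y|\}$. The induction is again on $s$. For the base case $s=1$, I would like to claim that for a bipartite graph $G$ with bipartition $V(G)=X\cup Y$, one has ${\rm reg}(I(G))\leq \min\{|X|,|Y|\}+1$. Since ${\rm match}(G)\leq\min\{|X|,|Y|\}$ for a bipartite graph (any matching injects into each side of the bipartition), this follows immediately from \cite[Theorem 6.7]{hv}: ${\rm reg}(I(G))\leq{\rm match}(G)+1\leq\min\{|X|,|Y|\}+1$.

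For the inductive step, suppose $s\geq 2$ and write $G(I(G)^{[s-1]})=\{u_1,\ldots,u_m\}$. By Theorem \ref{regcol},
\[
{\rm reg}(I(G)^{[s]})\leq\max\Big\{{\rm reg}\big(I(G)^{[s]}:u_i\big)+2(s-1),\ 1\leq i\leq m,\ {\rm reg}\big(I(G)^{[s-1]}\big)\Big\}.
\]
By the induction hypothesis ${\rm reg}(I(G)^{[s-1]})\leq\min\{|X|,|Y|\}+(s-1)$, which is dominated by the target $\min\{|X|,|Y|\}+s$. So it suffices to show, for each $i$, that ${\rm reg}\big(I(G)^{[s]}:u_i\big)\leq\min\{|X|,|Y|\}-2(s-1)+s=\min\{|X|,|Y|\}-s+2$. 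By Corollary \ref{graphh}, $I(G)^{[s]}:u_i=I(H_i)$ for a graph $H_i$ on vertex set $V(G)\setminus{\rm supp}(u_i)$, where each $u_i=e_1\cdots e_{s-1}$ consumes $s-1$ edges and hence $2(s-1)$ vertices, with $\lfloor(s-1)/1\rfloor$ of them — more precisely exactly $s-1$ — removed from each of $X$ and $Y$ (a matching edge of a bipartite graph has one endpoint in $X$ and one in $Y$). Thus $H_i$ has $|X|-(s-1)$ vertices inherited from $X$ and $|Y|-(s-1)$ from $Y$.

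The crux is that $H_i$ need not be bipartite: the even-connection edges produced in Corollary \ref{graphh}(ii) can join two vertices on the same side of the original bipartition. The key observation to rescue the argument is that an even-connection between $x_i$ and $x_j$ is a walk of odd length (it has $2r+1$ edges), so in the bipartite graph $G$ its two endpoints lie in \emph{different} classes of $X\cup Y$; hence every even-connection edge still goes between $X$ and $Y$, and $H_i$ remains bipartite with respect to the induced partition $(X\setminus{\rm supp}(u_i))\cup(Y\setminus{\rm supp}(u_i))$. Granting this, I apply the base-case inequality (equivalently \cite[Theorem 6.7]{hv} together with ${\rm match}(H_i)\leq\min\{|X\cap V(H_i)|,|Y\cap V(H_i)|\}$) to $H_i$ to get
\[
{\rm reg}\big(I(G)^{[s]}:u_i\big)={\rm reg}(I(H_i))\leq\min\{|X|-(s-1),\,|Y|-(s-1)\}+1=\min\{|X|,|Y|\}-s+2,
\]
which is exactly what the induction requires.

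The main obstacle is precisely the parity/bipartiteness bookkeeping in that third paragraph: one must verify carefully that every edge of $H_i$ — whether it comes from $E(G)$ directly (case (i)) or from an even-connection (case (ii)) — respects the partition of $V(H_i)$ into the surviving parts of $X$ and $Y$. For case (i) this is immediate from bipartiteness of $G$; for case (ii) it rests on the fact that an even-connection is a walk with an odd number of edges, so in a bipartite graph its endpoints land in opposite classes. Once this is established, the rest is the same vertex-counting bookkeeping as in Theorem \ref{n2}, with the single sharpening that a matching edge removes one vertex from each side of the bipartition (rather than two arbitrary vertices), which is what upgrades $\lfloor n/2\rfloor$ to $\min\{|X|,|Y|\}$.
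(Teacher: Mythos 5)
Your proposal is correct and follows essentially the same route as the paper: induction on $s$ via Theorem \ref{regcol} and Corollary \ref{graphh}, with the key point that an even-connection is a walk with an odd number of edges, so in a bipartite graph its endpoints lie in opposite parts and each $H_i$ stays bipartite with respect to $(X\setminus{\rm supp}(u_i))\cup(Y\setminus{\rm supp}(u_i))$. The paper states this bipartiteness preservation with less justification ("it easily follows from the definition of even-connection"), so your explicit parity argument is, if anything, a slightly fuller account of the same step.
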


\begin{proof}
We prove the assertion by induction on $s$. For $s=1$, we know from \cite[Theorem 6.7]{hv} that$${\rm reg}(I(G))\leq 1+{\rm match}(G)\leq 1+\min\{|X|, |Y|\}.$$Thus, suppose $s\geq 2$. Let $G(I(G)^{[s-1]})=\{u_1, \ldots, u_m\}$ denote the set of minimal monomial generators of $I(G)^{[s-1]}$. It follows from Theorem \ref{regcol} that$${\rm reg}(I(G)^{[s]})\leq \max\bigg\{{\rm reg}\big(I(G)^{[s]}:u_i\big)+2(s-1), 1\leq i\leq m, {\rm reg}\big(I(G)^{[s-1]}\big)\bigg\}.$$Using the above inequality and the induction hypothesis, it is enough to prove that$${\rm reg}\big(I(G)^{[s]}:u_i\big)\leq \min\{|X|, |Y|\}-s+2,$$for every integer $i$ with $1\leq i\leq m$. We conclude from Corollary \ref{graphh} that for every integer $i$ with $1\leq i\leq m$, there is a graph $H_i$ with $V(H_i)=V(G)\setminus {\rm supp}(u_i)$ such that $I(H_i)=(I(G)^{[s]}:u_i)$. Set $X_i:=X\setminus {\rm supp}(u_i)$ and $Y_i:=Y\setminus {\rm supp}(u_i)$. As $u_i$ is the product of $s-1$ disjoint edges of $G$, we have$$|X\cap {\rm supp}(u_i)|=|Y\cap {\rm supp}(u_i)|=s-1.$$Consequently$$|X_i|=|X|-(s-1) \ \ \ {\rm and} \ \ \ |Y_i|=|Y|-(s-1).$$Since $G$ is a bipartite graph, it easily follows from the definition of even-connection that two distinct vertices of $X$ can not be even-connected with respect to $u_i$. Similarly, two distinct vertices of $Y$ can not be even-connected with respect to $u_i$. This means that $H_i$ is a bipartite graph and $V(H_i)=X_i\cup Y_i$ is a bipartition for its vertex set. Therefore, we deduce from \cite[Theorem 6.7]{hv} that
\begin{align*}
& {\rm reg}\big(I(G)^{[s]}:u_i\big)\leq 1+{\rm match}(H_i)\leq 1+\min\{|X_i|, |Y_i|\}\\ & =1+\min\{|X|, |Y|\}-(s-1)\\ &=\min\{|X|, |Y|\}-s+2.
\end{align*}
This completes the proof.
\end{proof}

Recall that a graph $G$ is a {\it sequentially Cohen-Macaulay graph} if the ring $S/I(G)$ has the same property. The following corollary shows that inequality \ref{dag} is true for any sequentially Cohen-Macaulay bipartite graph.

\begin{cor} \label{seq}
Let $G$ be a sequentially Cohen-Macaulay bipartite graph. Then for every positive integer $s$ with $s\leq {\rm match}(G)$, we have $${\rm reg}(I(G)^{[s]})\leq {\rm match}(G)+s.$$
\end{cor}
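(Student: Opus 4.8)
The plan is to read the corollary off Theorem~\ref{bip}. That theorem bounds ${\rm reg}(I(G)^{[s]})$ by $\min\{|X|,|Y|\}+s$ for \emph{any} bipartition $V(G)=X\cup Y$, whereas the corollary asks for the bound ${\rm match}(G)+s$. Since ${\rm match}(G)\le\min\{|X|,|Y|\}$ holds for every bipartite graph, it suffices to produce a bipartition with $\min\{|X|,|Y|\}={\rm match}(G)$; by K\"onig's theorem this is the same as finding a matching that saturates the smaller side, equivalently exhibiting one side of the bipartition as a minimum vertex cover.

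Before attacking that, I would remove isolated vertices. They change neither ${\rm match}(G)$ nor $I(G)^{[s]}$, their deletion preserves sequential Cohen--Macaulayness (it only discards variables not occurring in $I(G)$), and it can only lower $\min\{|X|,|Y|\}$; so I may assume $G$ has no isolated vertices. The remaining, and decisive, point is the matching statement $\min\{|X|,|Y|\}={\rm match}(G)$. This is exactly where the hypothesis must be spent: for an arbitrary bipartite graph $\min\{|X|,|Y|\}$ can be strictly larger than ${\rm match}(G)$, so bipartiteness alone is useless and the sequential Cohen--Macaulay structure has to force the matching. I would try to prove it by induction on $|V(G)|$ through the structure theory of sequentially Cohen--Macaulay bipartite graphs (their identification with vertex--decomposable bipartite graphs): pick a shedding vertex, or a leaf $x$ with neighbour $y$, pass to the smaller sequentially Cohen--Macaulay bipartite graphs $G\setminus x$ and $G\setminus N_G[x]$, and try to enlarge an inductively obtained matching by the pendant edge $xy$ so that the result still saturates the smaller part.

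I expect this matching claim to be the main obstacle, and I would treat it with some suspicion. The equality $\min\{|X|,|Y|\}={\rm match}(G)$ is a familiar feature of \emph{Cohen--Macaulay} bipartite graphs, where one has an honest perfect matching and $|X|=|Y|={\rm match}(G)$; promoting it to the merely sequentially Cohen--Macaulay case is delicate, because the deficiency version of Hall's condition is fragile under the deletions used in the recursion and the choice of shedding vertex need not respect which side is smaller. If the equality fails for some sequentially Cohen--Macaulay bipartite graphs, the plan would need a supplement for those graphs --- for instance invoking the sharp forest bound of Erey and Hibi~\cite{eh} when $G$ happens to be a forest --- so that Theorem~\ref{bip} handles the cases with $\min\{|X|,|Y|\}={\rm match}(G)$ and a separate argument covers the rest. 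Pinning down exactly for which sequentially Cohen--Macaulay bipartite graphs the clean equality holds is the heart of the matter.
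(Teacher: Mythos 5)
Your reduction is precisely the one the paper uses: deduce the corollary from Theorem \ref{bip} by showing ${\rm match}(G)=\min\{|X|,|Y|\}$, and prove that equality by induction on $|V(G)|$ via the Van Tuyl--Villarreal result that a sequentially Cohen--Macaulay bipartite graph has a leaf $x$ whose deletion together with its neighbour leaves a sequentially Cohen--Macaulay graph, enlarging an inductively obtained maximum matching by the pendant edge $xy$. So at the level of strategy you have reconstructed the intended argument.

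The difficulty you flag is, however, real, and your proposal does not close it: the equality $\min\{|X|,|Y|\}={\rm match}(G)$ is false for sequentially Cohen--Macaulay bipartite graphs in general. Take the double star with adjacent centers $x_1\in X$ and $y_1\in Y$, two further leaves $y_2,y_3$ attached to $x_1$ and two further leaves $x_2,x_3$ attached to $y_1$. This is a tree, hence sequentially Cohen--Macaulay, it has no isolated vertices and $|X|=|Y|=3$, yet ${\rm match}(G)=2$ because $\{x_1,y_1\}$ is a vertex cover. The induction breaks exactly where you predicted: deleting $N_G[x_2]=\{x_2,y_1\}$ isolates $x_3$, so the inductive matching has size equal to the minimum of the two sides \emph{after} discarding isolated vertices, which can be strictly smaller than $\min\{|X|,|Y|\}-1$. (The same objection applies verbatim to the paper's own proof of this corollary, which asserts the false equality without accounting for isolated vertices created by the deletion.) Your proposed fallback --- invoking the Erey--Hibi bound when $G$ is a forest --- rescues this particular example but not an arbitrary sequentially Cohen--Macaulay bipartite graph with $\min\{|X|,|Y|\}>{\rm match}(G)$, so as written the proposal does not prove the statement; some genuinely different input is needed for those graphs.
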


\begin{proof}
Let $V(G)=X\cup Y$ be a bipartition for the vertex set of $G$. Using induction on $|V(G)|$, we prove that$${\rm match}(G)=\min\{|X|, |Y|\}.$$Then the assertion follows from Theorem \ref{bip}.

To prove the claim, it follows from \cite[Corollary 3.11]{vv} that $G$ has a vertex $x$ of degree one such that $G\setminus N_G[x]$ is sequentially Cohen-Macaulay. Let $y$ be the unique unique neighbor of $x$. We deduce from the induction hypothesis that $G\setminus N_G[x]$ has a matching of size $\min\{|X|, |Y|\}-1$. This matching together with the edge $xy$ forms a matching of size $\min\{|X|, |Y|\}$ in $G$.
\end{proof}


\section{Cameron-Walker graphs} \label{sec4}

As the main result of this section, we compute the regularity of squarefree powers of edge ideals of Cameron-Walker graphs, Theorem \ref{cw}. We first need the following simple lemmas. In the first lemma, we determine the matching number of Cameron-Walker bipartite graphs.

\begin{lem} \label{cwmatch}
Let $G$ be a Cameron-Walker bipartite graph and assume that $V(G)=X\cup Y$ is a bipartition for the vertex set of $G$. Then$${\rm match}(G)=\min\{|X|, |Y|\}.$$
\end{lem}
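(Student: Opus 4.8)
The plan is to prove the statement by exploiting the very specific structure of a Cameron-Walker bipartite graph coming from the classification in Section \ref{sec2}. Recall that for a connected Cameron-Walker bipartite graph, the vertex set partitions as $V(G)=X\cup Y$, where every vertex of $X$ has at least one pendant edge attached to it, and some vertices of $Y$ may carry pendant triangles. (Here $X$ and $Y$ will be chosen so that $X$ contains the vertices receiving pendant edges.) The key structural observation is that the pendant edges force a large matching on one side. First I would reduce to the connected case: since ${\rm match}$ and $\min\{|X|,|Y|\}$ are both additive over connected components, and the bipartition of $G$ restricts to a bipartition of each component, it suffices to prove the equality for each connected Cameron-Walker bipartite graph.

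For a connected Cameron-Walker bipartite graph, I would first dispose of the degenerate case where $G$ is a star graph, which is trivially handled by inspection. In the general case, let me set up notation so that $X$ is the set of vertices each carrying at least one pendant edge and $Y$ is the complementary part of the bipartite ``core'' $H$. The plan is to exhibit an explicit matching of size $\min\{|X|,|Y|\}$ and then argue no larger matching can exist. To build the matching I would choose, for each vertex $x\in X$, one pendant edge incident to it; since distinct vertices of $X$ receive distinct pendant leaves, these $|X|$ edges are pairwise disjoint and form a matching of size $|X|$. This immediately gives ${\rm match}(G)\geq |X|$.

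The main obstacle is to show that this matching is in fact \emph{maximum}, i.e.\ that ${\rm match}(G)=\min\{|X|,|Y|\}$ rather than something larger, and to reconcile this with the role of $Y$ and the pendant triangles. Here I would invoke the fact, established in the excerpt, that $G$ is Cameron-Walker, meaning ${\rm match}(G)={\rm ind\mbox{-}match}(G)$. Since an induced matching is in particular a matching, and since the pendant edges selected above are easily checked to form an induced matching (two leaves attached to distinct vertices of $X$ admit no connecting edge in a bipartite graph), one gets ${\rm ind\mbox{-}match}(G)\geq |X|$. The crucial point is the reverse inequality: every matching has size at most $\min\{|X|,|Y|\}\le |Y|$, and I must show $\min\{|X|,|Y|\}=|X|$ for these graphs, which amounts to checking $|X|\leq |Y|$. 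This follows because the core $H$ is connected bipartite with parts $X$ and $Y$, and each vertex of $X$ consumes at least one extra pendant vertex — a counting argument on leaves and triangle-vertices will pin down the relationship between $|X|$ and $|Y|$ once leaves and triangle apexes are correctly assigned to the two sides of the global bipartition $V(G)=X\cup Y$.

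To finish, I would combine the two bounds: the explicit matching gives ${\rm match}(G)\ge |X|$, the König-type bound (or directly $|V(G)\cap X|$ being an obvious obstruction) gives ${\rm match}(G)\le\min\{|X|,|Y|\}$, and the identification $\min\{|X|,|Y|\}=|X|$ closes the gap, yielding ${\rm match}(G)=\min\{|X|,|Y|\}$. I expect the delicate bookkeeping to lie entirely in correctly placing the leaves, the bipartite-core vertices, and the pendant-triangle vertices into the two classes of the global bipartition, and in verifying that this placement makes $|X|$ the smaller of the two part sizes; once that accounting is set up, the equality is immediate from the additivity over components and the Cameron-Walker hypothesis.
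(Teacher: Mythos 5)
Your outline follows essentially the same route as the paper, whose entire proof is a reduction to the connected case followed by an appeal to the structure theorem for connected Cameron--Walker graphs; so in spirit you are doing exactly what the author does, just with the details you promise to fill in later. Two of those details deserve comment. First, the ``delicate bookkeeping'' you defer to the end is where all the content lies, and it is much shorter than you anticipate: since $G$ is bipartite it contains no triangles whatsoever, so the pendant triangles and ``triangle apexes'' you plan to place into the bipartition simply do not occur. The only non-star case is a connected bipartite core $H$ with parts $X$ and $Y\setminus L$, where $L$ is the set of leaves, each attached to a vertex of $X$; then $|Y|\geq |L|\geq |X|$ because distinct vertices of $X$ own distinct leaves, the chosen pendant edges give ${\rm match}(G)\geq |X|$, and the trivial bipartite bound ${\rm match}(G)\leq \min\{|X|,|Y|\}=|X|$ closes the argument. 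Second, the detour through the equality of the matching and induced matching numbers buys you nothing: exhibiting an \emph{induced} matching of size $|X|$ only reproves ${\rm match}(G)\geq |X|$, and the upper bound you actually invoke is the trivial one.

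The one step in your write-up that is false as stated is the opening reduction: $\min\{|X|,|Y|\}$ is \emph{not} additive over connected components for an arbitrary bipartition. For $G=K_{1,3}\sqcup K_{1,3}$ one may take $X$ to consist of the first center together with the three leaves of the second star, giving $\min\{|X|,|Y|\}=4$ while ${\rm match}(G)=2$. (This also shows the lemma, read literally for an arbitrary bipartition of a disconnected graph, fails; the paper's own one-line proof has the same blind spot.) The fix is to prove the equality componentwise for the bipartition inherited from the structure theorem --- for a connected bipartite graph the bipartition is unique, so no choice is involved --- and to observe that this aligned bipartition is all that is needed where the lemma is applied in Theorem \ref{cw}, since Theorem \ref{bip} holds for every bipartition and one is free to use the best one.
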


\begin{proof}
Without loss of generality, we may suppose that $G$ is a connected graph. Then the claim easily follows from the structure of Cameron-Walker connected graphs, mentioned in Section \ref{sec2}.
\end{proof}

The following lemma helps us to use induction for computing the regularity of squarefree powers of edge ideals of Cameron-Walker graphs.

\begin{lem} \label{cwcolon}
Let $G$ be a graph and assume that $T$ is a triangle of $G$, with vertex set $V(T)=\{x, y, z\}$. Suppose that ${\rm deg}_G(x)={\rm deg}_G(y)=2$. Set $H:=G\setminus \{x, y\}$. Then for every integer $s\geq 2$,$$\big(I(G)^{[s]}: xy\big)=I(H)^{[s-1]}.$$
\end{lem}

\begin{proof}
The inclusion "$\supseteq$" is trivial. To prove that reverse inclusion, let $u$ be a monomial in the set of minimal monomial generators of $(I(G)^{[s]}: xy)$. Then $uxy$ is a squarefree monomial and there exist disjoint edges $e_1, \ldots , e_s\in E(G)$ such that $e_1\ldots e_s$ divides $uxy$. If either $x$ or $y$ does not divide $e_1\ldots e_s$, then clearly, $u\in I(H)^{[s-1]}$. So, suppose that $x$ and $y$ divide $e_1\ldots e_s$. If there is an integer $k$ with $1\leq k\leq s$ such that $e_k=xy$, then $e_1\ldots e_{k-1}e_{k+1}\ldots e_s$ divides $u$. Thus, $u\in I(H)^{[s-1]}$. Consequently, we assume that for every integer $k$ with $1\leq k\leq s$, we have $e_k\neq xy$. This yields that $x$ and $y$ appear in distinct edges $e_i$ and $e_j$ with $1\leq i, j\leq s$. Both of these edges must be incident to $z$ which is a contradiction, as the edges $e_1, \ldots , e_s$ are disjoint.
\end{proof}

We are now ready to prove the main result of this section.

\begin{thm} \label{cw}
Let $G$ be a Cameron-Walker graph. Then for every positive integer $s$ with $s\leq {\rm match}(G)$, we have $${\rm reg}(I(G)^{[s]})={\rm match}(G)+s.$$
\end{thm}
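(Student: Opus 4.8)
The plan is to establish the two inequalities ${\rm reg}(I(G)^{[s]}) \leq {\rm match}(G) + s$ and ${\rm reg}(I(G)^{[s]}) \geq {\rm match}(G) + s$ separately. The upper bound is the harder conceptual part; the lower bound should follow from a general principle relating induced matchings to the regularity of squarefree powers. For the lower bound, I would use the fact that if $G$ has an induced matching of size $r$, then one expects ${\rm reg}(I(G)^{[s]}) \geq r + s$ for $s \leq r$; since $G$ is Cameron-Walker, $\ind\text{-}{\rm match}(G) = {\rm match}(G)$, so this gives exactly ${\rm reg}(I(G)^{[s]}) \geq {\rm match}(G) + s$. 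Concretely, an induced matching $\{e_1, \ldots, e_r\}$ of size $r = {\rm match}(G)$ gives rise to a generator $e_1 \cdots e_s$ of $I(G)^{[s]}$ whose associated even-connection graph is sparse, and one can exhibit a nonzero Betti number in the expected homological degree, or invoke an existing result (such as the induced-matching lower bound used in \cite{ehhs}) in squarefree-power form.

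For the upper bound, I would reduce to the connected case and then to the bipartite case by peeling off pendant triangles. The structural theorem from Section \ref{sec2} says a connected Cameron-Walker graph is either a star, a star triangle, or a bipartite graph $H$ with bipartition $X \cup Y$ carrying pendant edges on $X$ and pendant triangles on $Y$. The key tool is Lemma \ref{cwcolon}: for a pendant triangle $T = \{x,y,z\}$ with $\deg_G(x) = \deg_G(y) = 2$, we have $(I(G)^{[s]} : xy) = I(H)^{[s-1]}$ where $H = G \setminus \{x,y\}$. This lets me set up an induction. First I would handle the base cases (star graphs and star triangles) directly — for these the squarefree powers have linear resolution or can be computed explicitly. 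Then, when $G$ has a pendant triangle, I would use a short exact sequence
\begin{align*}
0 \rightarrow S/(I(G)^{[s]} : xy)(-2) \rightarrow S/(I(G)^{[s]}, xy) \rightarrow S/(I(G)^{[s]}, xy, \text{rest}) \rightarrow 0
\end{align*}
together with the regularity bound ${\rm reg}(M) \leq \max\{{\rm reg}(M/(u:{\rm something})) , \ldots\}$ controlled via Lemma \ref{cwcolon}, tracking the degree shift of $2$ from removing the edge $xy$. The recursion $(I(G)^{[s]} : xy) = I(H)^{[s-1]}$ drops both $s$ and ${\rm match}$ by one (since removing the pendant triangle removes one from the matching number), so the inductive bound ${\rm reg}(I(H)^{[s-1]}) = {\rm match}(H) + (s-1) = ({\rm match}(G) - 1) + (s-1)$ reassembles correctly after the shift.

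When no pendant triangles remain, $G$ is Cameron-Walker bipartite, and here I would invoke Theorem \ref{bip} together with Lemma \ref{cwmatch}: since ${\rm match}(G) = \min\{|X|, |Y|\}$, Theorem \ref{bip} gives ${\rm reg}(I(G)^{[s]}) \leq \min\{|X|, |Y|\} + s = {\rm match}(G) + s$ directly. Thus the bipartite case needs no new work. The inductive step on pendant triangles is therefore the crux: I must carefully choose the generators $u_1, \ldots, u_m$ of $I(G)^{[s]}$ and run the Banerjee-style filtration, ensuring each colon ideal either reduces via Lemma \ref{cwcolon} to a smaller Cameron-Walker graph or is already controlled by the bipartite bound.

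The main obstacle I anticipate is bookkeeping in the short exact sequence argument to ensure the regularity contributions from the colon term (shifted by $2$) and from the quotient term both stay at or below ${\rm match}(G) + s$. Specifically, I need ${\rm reg}(I(H)^{[s-1]}) + 2 \leq {\rm match}(G) + s$, which by induction reads $({\rm match}(G) - 1 + s - 1) + 2 = {\rm match}(G) + s$ — so the bound is tight and leaves no slack, meaning every inequality in the filtration must be handled with the correct shift and no loss. Verifying that the remaining quotient after removing $xy$ corresponds again to a Cameron-Walker graph (or directly reduces to the bipartite case) is where I expect the most delicate combinatorial checking, since deleting vertices of a pendant triangle must preserve the Cameron-Walker structure on the residual graph.
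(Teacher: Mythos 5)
Your skeleton agrees with the paper's argument: the lower bound comes from the induced-matching bound of \cite[Theorem 2.1]{ehhs} together with the defining property that the induced matching number of a Cameron--Walker graph equals ${\rm match}(G)$, and the upper bound is an induction that colons out the edge $xy$ of a pendant triangle via Lemma \ref{cwcolon} and terminates in the bipartite case, which Theorem \ref{bip} and Lemma \ref{cwmatch} dispatch. (Two minor points: the middle term of your displayed sequence should be $S/I(G)^{[s]}$, not $S/(I(G)^{[s]},xy)$; and no separate base cases for stars and star triangles are needed, since stars are bipartite and star triangles contain pendant triangles. Also, Theorem \ref{regcol} and the Banerjee-style filtration over all generators $u_1,\ldots,u_m$ play no role in this particular proof; a single colon by $xy$ is what is used.)

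The genuine gap is the cokernel of that first exact sequence, namely $S/(I(G)^{[s]}+(xy))$, which you leave as ``rest'' and claim ``is already controlled by the bipartite bound.'' It is not: writing $H_1$ for $G$ with the edge $xy$ deleted, the ideal $(I(H_1)^{[s]},xy)$ is neither a squarefree power of an edge ideal nor associated to a bipartite graph (the edges $xz$ and $yz$ are still present), and Lemma \ref{cwcolon} does not apply to it. The paper spends the bulk of the proof precisely here: two further short exact sequences colon out $xz$ and then $yz$, which are pendant edges of the successive graphs, and \cite[Lemma 22]{eh} identifies each resulting colon ideal as $I(H_4)^{[s-1]}$ plus a single variable, where $H_4=G\setminus\{x,y,z\}$; the final quotient $(I(H_5)^{[s]},xy,xz,yz)$, with $H_5$ obtained by deleting all three triangle edges, is then bounded by the subadditivity estimate ${\rm reg}(I+J)\leq{\rm reg}(I)+{\rm reg}(J)-1$ of \cite{he} (see also \cite{km}) applied to $I(H_5)^{[s]}$ and $(xy,xz,yz)$. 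As you yourself observe, the target inequality has no slack, and it is exactly the ``$-1$'' in that subadditivity bound which makes the last step close; without some such mechanism for the quotient term your induction does not go through.
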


\begin{proof}
It follows from \cite[Theorem 2.1]{ehhs} that for every positive integer $s\leq {\rm match}(G)$,$${\rm reg}(I(G)^{[s]})\geq \ind-match(G)+s={\rm match}(G)+s.$$Therefore, it is enough to prove that$${\rm reg}(I(G)^{[s]})\leq {\rm match}(G)+s$$for every positive integer $s\leq {\rm match}(G)$. We use induction on $|E(G)|$. If $G$ is bipartite, then the above inequality follows from Theorem \ref{bip} and Lemma \ref{cwmatch}. So, suppose $G$ is not a bipartite graph. In particular, it follows from the construction of Cameron-Walker graphs, mentioned in Section \ref{sec2}, that $G$ contains a triangle $T$ with vertex set $V(T):=\{x, y, z\}$ such that ${\rm deg}_G(x)={\rm deg}_G(y)=2$. Using \cite[Theorem 6.7]{hv}, we may assume that $s\geq 2$. Consider the following short exact sequence.
\begin{align*}
0 \longrightarrow \frac{S}{(I(G)^{[s]}:xy)}(-2)\longrightarrow \frac{S}{I(G)^{[s]}}\longrightarrow \frac{S}{I(G)^{[s]}+(xy)}\longrightarrow 0
\end{align*}
Let $H_1$ be the graph which is obtained from $G$ by deleting the edge $xy$. Note that$$I(G)^{[s]}+(xy)=I(H_1)^{[s]}+(xy).$$Set $H_2:=G\setminus \{x, y\}$. It follows from Lemma \ref{cwcolon} and the above exact sequence that
\[
\begin{array}{rl}
{\rm reg}(I(G)^{[s]})\leq \max\big\{{\rm reg}(I(H_2)^{[s-1]})+2, {\rm reg}(I(H_1)^{[s]},xy)\big\}.
\end{array} \tag{2} \label{2}
\]
It is obvious from the structure of Cameron-Walker graphs that $H_2$ is a Cameron-Walker graph. Moreover, ${\rm match}(H_2)={\rm match}(G)-1$. Therefore, we deduce from the induction hypothesis that
\[
\begin{array}{rl}
{\rm reg}(I(H_2)^{[s-1]})\leq {\rm match}(H_2)+s-1={\rm match}(G)+s-2.
\end{array} \tag{3} \label{3}
\]

Now, consider the following short exact sequence.
\begin{align*}
0 \longrightarrow \frac{S}{\big((I(H_1)^{[s]},xy):xz\big)}(-2)\longrightarrow \frac{S}{(I(H_1)^{[s]},xy)}\longrightarrow \frac{S}{(I(H_1)^{[s]}, xy,xz)}\longrightarrow 0
\end{align*}
Let $H_3$ be the graph obtained from $H_1$ by deleting the edge $xz$ and note that$$(I(H_1)^{[s]}, xy,xz)=(I(H_3)^{[s]}, xy,xz).$$Set $H_4:=H_1\setminus \{x,y,z\}$. Clearly, $xz$ is a pendant edge of $H_1$. Hence, we conclude from \cite[Lemma 22]{eh} that
\begin{align*}
& \big((I(H_1)^{[s]},xy):xz\big)=(I(H_1)^{[s]}:xz)+(xy:xz)\\ &=I(H_1\setminus \{x,z\})^{[s-1]}+(y)=I(H_4)^{[s-1]}+(y).
\end{align*}
Thus, it follows from the above exact sequence that
\[
\begin{array}{rl}
{\rm reg}(I(H_1)^{[s]},xy)\leq \max\big\{{\rm reg}(I(H_4)^{[s-1]}+(y))+2, {\rm reg}(I(H_3)^{[s]},xy,xz)\big\}.
\end{array} \tag{4} \label{4}
\]
It is easy to see that $H_4$ is a Cameron-Walker graph and ${\rm match}(H_4)={\rm match}(G)-1$. Therefore, using \cite[Theorem 20.2]{p'} and the induction hypothesis, we have
\[
\begin{array}{rl}
{\rm reg}\big(I(H_4)^{[s-1]}+(y)\big)\leq {\rm match}(H_4)+s-1={\rm match}(G)+s-2.
\end{array} \tag{5} \label{5}
\]

Consider the following short exact sequence.
\begin{align*}
0 & \longrightarrow \frac{S}{\big((I(H_3)^{[s]},xy,xz):yz\big)}(-2)\longrightarrow \frac{S}{(I(H_3)^{[s]},xy,xz)}\\ &\longrightarrow \frac{S}{(I(H_3)^{[s]}, xy,xz, yz)}\longrightarrow 0
\end{align*}
Let $H_5$ be the graph obtained from $H_3$ by deleting the edge $yz$ and note that$$(I(H_3)^{[s]}, xy,xz,yz)=(I(H_5)^{[s]}, xy,xz,yz).$$Clearly, $yz$ is a pendant edge of $H_3$. Hence, we conclude from \cite[Lemma 22]{eh} that
\begin{align*}
& \big((I(H_3)^{[s]},xy,xz):yz\big)=(I(H_3)^{[s]}:yz)+\big((xy, xz):yz\big)\\ &=I(H_3\setminus \{y,z\})^{[s-1]}+(x)=I(H_4)^{[s-1]}+(x).
\end{align*}
Thus, it follows from the above exact sequence that
\[
\begin{array}{rl}
{\rm reg}(I(H_3)^{[s]},xy,xz)\leq\max\big\{{\rm reg}(I(H_4)^{[s-1]}+(x))+2, {\rm reg}(I(H_5)^{[s]},xy,xz,yz)\big\}.
\end{array} \tag{6} \label{6}
\]
Remind that $H_4$ is a Cameron-Walker graph with ${\rm match}(H_4)={\rm match}(G)-1$. Therefore, we conclude from \cite[Theorem 20.2]{p'} and the induction hypothesis that
\[
\begin{array}{rl}
{\rm reg}\big(I(H_4)^{[s-1]}+(x)\big)\leq {\rm match}(H_4)+s-1={\rm match}(G)+s-2.
\end{array} \tag{7} \label{7}
\]
Note that $H_5$ is a Cameron-Walker graph with ${\rm match}(H_5)={\rm match}(G)-1$. Hence, using \cite[Corollary 3.2]{he} (see also \cite[Theorem 1.2]{km}) and the induction hypothesis, we have
\begin{align*}
& {\rm reg}(I(H_5)^{[s]},xy,xz,yz)\leq {\rm reg}(I(H_5)^{[s]})+{\rm reg}(xy,xz,yz)-1\\ &\leq {\rm match}(H_5)+s+2-1={\rm match}(G)-1+s+1\\ &={\rm match}(G)+s.
\end{align*}
The assertion follows by combining the above inequality with inequalities (\ref{2}), (\ref{3}), (\ref{4}), (\ref{5}), (\ref{6}) and (\ref{7}).
\end{proof}

The following corollary is an immediate consequence of Theorem \ref{cw}.

\begin{cor}
Let $G$ be a Cameron-Walker graph and suppose that $s\leq {\rm match}(G)$ is a positive integer. Then $I(G)^{[s]}$ has a linear resolution if and only if $s={\rm match}(G)$.
\end{cor}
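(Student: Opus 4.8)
The plan is to prove the final statement, namely that for a Cameron-Walker graph $G$ and every positive integer $s \leq {\rm match}(G)$, the ideal $I(G)^{[s]}$ has a linear resolution if and only if $s = {\rm match}(G)$. My approach rests entirely on Theorem \ref{cw}, which gives the exact value ${\rm reg}(I(G)^{[s]}) = {\rm match}(G) + s$. The key observation is that the minimal generators of $I(G)^{[s]}$ all have degree $2s$: each generator is a product $e_1 \ldots e_s$ of $s$ disjoint edges, and each edge contributes a degree-two squarefree monomial. Therefore $I(G)^{[s]}$ is generated in the single degree $2s$, so it can have a linear resolution only in the sense that $\beta_{i,i+t}(I(G)^{[s]}) = 0$ unless $t = 2s$; in particular, if it has a linear resolution at all, the linear strand sits in degree $d = 2s$, and this forces ${\rm reg}(I(G)^{[s]}) = 2s$.

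The proof then reduces to a simple equivalence of numerical conditions. First I would record that $I(G)^{[s]}$ is generated in degree $2s$, which is immediate from the description of generators of squarefree powers given in Section \ref{sec2}. It follows that $I(G)^{[s]}$ has a linear resolution if and only if ${\rm reg}(I(G)^{[s]}) = 2s$: indeed, the regularity of an ideal generated in degree $2s$ is always at least $2s$, with equality precisely when the resolution is linear. Combining this with Theorem \ref{cw}, we have a linear resolution if and only if ${\rm match}(G) + s = 2s$, that is, if and only if $s = {\rm match}(G)$.

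The backward direction is also subsumed by the above, but it is worth noting that it already follows from \cite[Theorem 5.1]{bhz} (cited earlier in the introduction), which asserts that $I(G)^{[{\rm match}(G)]}$ has a linear resolution for any graph $G$; thus no Cameron-Walker hypothesis is needed for that implication. The content specific to Cameron-Walker graphs lies in the forward direction, where Theorem \ref{cw} pins down the regularity exactly and shows it exceeds $2s$ whenever $s < {\rm match}(G)$, ruling out a linear resolution in that range.

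I do not anticipate a genuine obstacle here, since all the hard work is already carried out in Theorem \ref{cw}. The only point requiring care is the clean statement of the equivalence ``generated in degree $2s$ plus regularity $2s$ iff linear resolution,'' which is a standard fact about graded modules but should be invoked correctly: one must be sure that all minimal generators genuinely lie in degree $2s$ (so there is no lower-degree generator that would preclude talking about a single linear strand), and this is exactly what the squarefree-power structure guarantees.
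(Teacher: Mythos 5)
Your proposal is correct and is exactly the argument the paper intends: the paper states this corollary as an immediate consequence of Theorem \ref{cw}, and the implied reasoning is precisely your observation that $I(G)^{[s]}$ is generated in degree $2s$, hence has a linear resolution if and only if ${\rm reg}(I(G)^{[s]})=2s$, which by Theorem \ref{cw} happens if and only if ${\rm match}(G)+s=2s$, i.e., $s={\rm match}(G)$.
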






\end{document}